\newcommand{\numberset}{\mathbb}							
\newcommand{\R}{\numberset{R}}
\newcommand{\blue}[1]{{\textcolor{black}{#1}}}
\DeclareMathAlphabet{\mathpzc}{OT1}{pzc}{m}{it}
\DeclareMathAlphabet{\mathcal}{OMS}{cmsy}{m}{n}
\newtheorem{lemma}{Lemma}
\newtheorem{corollary}{Corollary}                   						
\newtheorem{remark}{Remark}
\newtheorem{theorem}{Theorem}
\newcommand{\vect}[1]{\boldsymbol{#1}}
\DeclareMathOperator{\rank}{rank}
\DeclareMathOperator{\im}{Image}
\DeclareMathOperator{\Dom}{Dom}	
\DeclareMathOperator{\col}{col}
\newcommand{\setof}[1]{\left\{#1\right\}} % automatic sizing of curly braces { } around stuff inside
\newcommand{\brackets}[1]{\left[#1\right]}
\newcommand{\braces}[1]{\left(#1\right)}
\newcommand{\angles}[1]{\langle#1\rangle}
\newcommand{\norm}[1]{\|#1\|}
\title{Novel current actuated piezoelectric composite model with fully dynamic electromagnetic field}
\author{Matthijs C. de Jong and Jacquelien M. A. Scherpen %~\IEEEmembership{Life~Fellow,~IEEE}% <-this % stops a space
		\thanks{ M.C. de Jong and J.M.A. Scherpen are with Jan C. Wilems Center for Systems and Control, ENTEG, Faculty of Science and Engineering, University of Groningen, Nijenborgh 4, 9747 AG Groningen, the Netherlands (email: \{  m.c.dejong, j.m.a.scherpen\}@rug.nl).}
		\thanks{We would like to acknowledge Kirsten A. Morris for her insights and suggestions during the development of this work.}%(project number: 82203)
	}
\begin{document}

\maketitle
%*****************___________________*****************
\begin{abstract}
  
    In this paper we propose a novel current actuated fully dynamic piezoelectric composite model. This new model complements the existing modeling framework of deriving piezoelectric models. We show that the novel composite model is well-posed. Furthermore, we consider two approximation methods, FEM and MFEM and investigate the stabilizability properties. Finally, we include the closed-loop behaviour in the numerical results.
    
\end{abstract}

\begin{IEEEkeywords}
    \blue{Current actuation, stabilizability, finite element method (FEM), mixed finite element method (MFEM), piezoelectric composite.}
\end{IEEEkeywords}

\section{Introduction}
        % piezoelectric actuator (i.e. a piezoelectric layer sandwiched between a bottom and top electrode) and beam
        %   By applying an electric stimuli, in our case an electric current, the piezoelectric beam elongates or contracts in the longitudinal direction $z_1$. Once the piezoelectric beam is interconnected with a mechanical substrate, we call this a piezoelectric composite. The composite can deform in the transversal direction $z_3$, due to the shear stresses between the piezoelectric beam and mechanical substrate.\\ 
        %Say something on treatment of electromagnetic domain
            % Static, Quasi-static, and fully dynamic electromagnetic field
        
         \blue{\IEEEPARstart{A}{piezoelectric}  actuator is a piece of piezoelectric material, which is sandwiched between two layers of electrodes. By an electric stimulus, such as voltage, charge, or current, it can compress or elongate in one or more directions. By gluing this type of piezoelectric actuator onto the surface of a mechanical substrate, the deformation of the actuator incurs shear stress in the substrate, which curves the composition. A mechanical substrate with one or more piezoelectric actuators we refer to as a piezoelectric composite, see Fig \ref{fig:piezoelectriccomposite} and is useful in high precision applications. An other specific type of electric actuator is the piezoelectric beam, where an electric stimulus acting on the transverse axis only incurs deformation on the longitudinal direction.}\\
         
         %When solely the piezoelectric actuator is considered, it is referred to as a piezoelectric beam. 
	From a control perspective, there exist roughly two rubrics of applications for piezoelectric composites, i.e. vibration control and shape control. The former has applications in acoustic devices \cite{Ralib2015} or suppression of vibrations in mechanical applications \cite{Samikkannu2002vibrationdamp}. The latter rubric envelopes applications such as flexible wings \cite{CHUNG2009136}, inflatable space structures \cite{voss2010port} and deformable mirrors \cite{Radzewicz04}.  Often, in applications including inflatable space structures and deformable mirrors, one side of the substrate has a specific function (e.g. reflecting electromagnetic waves). Therefore, we consider in this work a composite where the piezoelectric actuator is attached to one side of the purely mechanical layer, see Fig \ref{fig:piezoelectriccomposite} for a depiction.\\
	
	\begin{figure}[!t]
		\centering
		\label{fig:piezoelectriccomposite}
		\includegraphics[width=1\columnwidth]{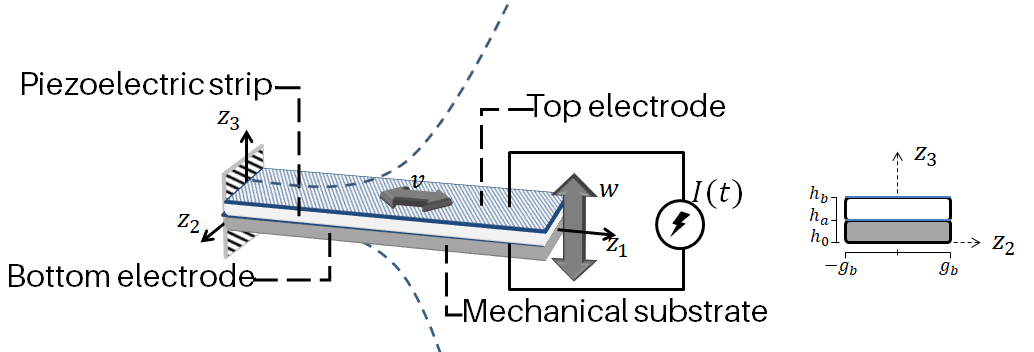}
		\caption{Piezoelectric composite with longitudinal deflection $v$ and transverse deflection $w$.}
	\end{figure}
	
	The dynamics describing the behaviour of a piezoelectric composite originate from continuum mechanics (mechanical domain) and Maxwell's equations (electromagnetic domain) and results in a set of partial differential equations (PDE). PDEs with different structure and properties are derived by changing the assumptions for either the mechanical or electromagnetic domain. For the mechanical domain, a different type of beam theory can be assumed, and non-linear phenomena can be incorporated. For the electromagnetic domain, different assumptions on the treatment of Maxwell's equations result in different dynamics and coupling of the electric, magnetic, and mechanical quantities. Often, in literature, the assumptions for the electromagnetic domain are typified as a \textit{static electric-field}, a \textit{quasi-static electric-field}, or a \textit{fully dynamic electromagnetic field}. Recent efforts show that the treatment of the electromagnetic domain and choice of input severely alters the controllability and stabilizability properties of the dynamics, see for instance \cite{MenOCDC2013,MenOACC2014,MenOMTNS2014,MenOSIAM2014,voss2011CDC,VenSMMS2011,VenSSIAM2014,voss2010port,OzerMCSS2015,OzerTAC2019}.\\
	
	The traditional choice of actuation is voltage actuation. 
    Voltage actuated linear infinite-dimensional piezoelectric beam models are exactly controllable and exponentially stabilizable in the case of a static or quasi-static electric-field, see \cite{komornik2005}, \cite{KapitonovMM07}, and \cite{LASIECKA2009167}. In \cite{MenOSIAM2014}, the fully dynamic voltage actuated beam model is shown to be asymptotically stabilizable, for almost all system coefficients and exponentially stabilizable for a small set of system coefficients. In \cite{OzerMCSS2015} it has been shown that the fully dynamical beam is polynomial stabilizable when certain conditions on the physical coefficients are satisfied.\\
	
	Due to less-hysteretic behaviour of charge and current actuated piezoelectric systems, recent studies involved the stabilizability of such models. The charge actuated models show similar stabilizability properties to voltage actuated models \cite{MenOSIAM2014}. There are two ways to derive current actuated piezoelectric models. The simplest is obtained by adding a dynamical equation on the boundary to convert the charge input into a current input, which we will refer to as \textit{current-through-the-boundary} actuation. Physically this corresponds to incorporating some electric circuitry. The other way of obtaining current actuated systems is by careful considerations on the electromagnetic domain, resulting in a \textit{purely} current actuated system. The modelling of purely current actuated piezoelectric systems is described in \cite{MenOSIAM2014,OzerTAC2019}.\\

	The stabilizability of current actuated piezoelectric beams has been investigated under a fully dynamic electromagnetic field, where the model is derived using magnetic vector potentials that require an additional gauge condition to guarantee a solution in \cite{MenOCDC2014}. It has been shown that the control input is bounded in the energy space and can be asymptotically stabilized. Recently, in \cite{OzerTAC2019}, the current actuated piezoelectric beam model from \cite{MenOCDC2014} has been interconnected with a substrate. Due to the use of magnetic vector potentials, the model allows for both current and charge input. It has been shown that the reduced electrostatic models with charge and current-through-the-boundary models are respectively exponentially and asymptotically stabilizable and the derived purely current actuated fully dynamical model is not stabilizable.\\
	
	In \cite{VenSSIAM2014} two current actuated non-linear piezoelectric composite systems are derived with a port-Hamiltonian \cite{port-HamiltonianIntroductory14} perspective. One system is derived with a purely current actuated system with quasi-static electric-field assumption and the other a current-through-the-boundary system with the fully dynamic electromagnetic field assumption. The approximations of the fully dynamic piezoelectric beam \cite{VenSSIAM2014}, using the mixed finite element method \cite{GoloSchaft2004, VenSMMS2011}, has been shown in \cite{voss2011CDC} to satisfy a necessary condition for asymptotic stabilizability. In contrast, the quasi-static system do not satisfy this condition and thus are not stabilizable. \\
	
% 	A non-linear fully dynamic piezoelectric beam \cite{VenSSIAM2014} in port-Hamiltonian formalism  has been shown to satisfy necessary conditions for asymptotic stabilizability, in \cite{voss2011CDC}. Furthermore, it has been shown that the approximations of a purely current actuated non-linear piezoelectric Timoshenko beam in the port-Hamiltonian formulation, using the mixed finite element method \cite{GoloSchaft2004} is not stabilizable under the quasi-static field assumption. A fully dynamic electromagnetic model is similarly investigated and has been shown to satisfy the necessary conditions for asymptotic stabilizability. Although, the fully dynamic model for \cite{VenSSIAM2014} is actuated by an applied current, the concerning model is in fact a current-through-the-boundary actuated composite obtained by exploiting the boundary-port structure of the port-Hamiltonian formalism.\\
	
    In this work, we present a novel fully dynamic piezoelectric composite model with purely current input. We use, to the best of our knowledge, a new and elegant treatment of the electromagnetic domain that does not require the use of a gauge function. \blue{The new fully dynamic electromagnetic model is a more comprehensive version, in terms of the electromagnetic treatment, of the quasi-static electromagnetic model in \cite{voss2011CDC,VenSSIAM2014}, which has been shown to be not stabilizable}. The new model is derived by using a novel treatment of Maxwell's equations producing the electromagnetic dynamics in terms of the magnetic-flux. Besides the well-posedness, we also investigate the stabilizability of this new model and supplement these results with some numerical results.\\
	
% 	Good understanding of the nuances involved in modelling piezoelectric actuators is required to draw conclusions on controllability and stabilizability. 
	%The combination of the subtleties in Maxwell's equations and the ambiguous appellation for the electromagnetic domain makes the influence on controllability and stabilizability vague at best. 
% 	Therefore, in this work we present a thorough exposition of Maxwell's equations in relation to the assumptions on piezoelectric material, commonly used in literature.
	
% 	Hereby, a new and elegant treatment of the electromagnetic domain has been found. Therefore, a novel model for current actuated piezoelectric actuator and composites is presented.  Furthermore, the asymptotic stabilizability of the approximations of the current actuated piezoelectric composites under both fully dynamic and quasi-static electric-field assumption is investigated, which adds meaningful insights to the existing framework on stabilizability of piezoelectric systems.\\
	
	The models presented here are derived under the assumption of linear relationships, and for the mechanical domain, we consider the Euler-Bernoulli beam theory. In the next section, we treat the derivation of the novel model and compare the treatment of Maxwell's equations to the existing methods. 
     In Section III we show that the novel model is well-posed. In Section IV, the novel purely current actuated piezoelectric model is discretized using two approximations methods, i.e. the finite element method (FEM) and the mixed finite element method (MFEM). In Section V, we investigate stabilizability properties of the approximated composites and in Section IV some illustrative simulation are presented to accompany the stabilizability results.

%*****************___________________*****************
\section{Derivation physical model}
    % Problem setting
        In this section, the derivation of the novel current actuated piezoelectric composite model is done and at the end of this section we discuss the differences with respect to existing current actuated models. The piezoelectric composite investigated in this work is a piezoelectric actuator superimposed and fixed on top of a purely mechanical substrate. See Fig \ref{fig:piezoelectriccomposite} for a depiction of the system. The governing equations are derived by modelling the piezoelectric actuator and interconnecting it with the governing equations for the mechanical substrate. We focus on current as electric stimulus, which allows the composite to deform in the longitudinal $z_1$ and transverse direction $z_3$ as a result of shear stresses between the piezoelectric actuator and mechanical substrate.\\
        
        %Constitutive models
        The dynamical equations governing piezoelectric beams originate from Maxwell's equations \cite{eom2013maxwell} and continuum mechanics \cite{carrera2011beam}. The coupling between the electromagnetic and mechanical characteristics, such as the electric displacement $\vect{D}$, electric-field $\vect{E}$ and mechanical stress $\vect{\sigma}$ and strain $\vect{\epsilon}$, are described using the piezoelectric constitutive relations
        	\begin{alignat}{1}\label{eq:Constitutive_relations}
    	\left[\begin{array}{c}
            	\vect{\sigma}\\
            	\vect{D}
        	\end{array}\right]= & \left[\begin{array}{cc}
            	C & -e\\
            	e & \varepsilon
            	\end{array}\right]\left[\begin{array}{c}
            	\vect{\epsilon}\\
            	\vect{E}
        	\end{array}\right],
    	\end{alignat}
        where $C$ is the stiffness matrix, $e$ denotes the piezoelectric constants matrix, and $\varepsilon$ the diagonal permittivity matrix. \blue{For the one-dimensional piezoelectric actuator, where the electric stimuli is applied along $E_3$, the components $E_1=E_2=0$. }The constitutive relations are reduced to scalar equations. Let $C_{11}$, $C_{55}$, $\beta:=\varepsilon_{33}$ and $\gamma:=e_{15}$ represent the mechanical stiffness, shear constant,  magnetic permittivity, and piezoelectric constant, respectively. Then, the one-dimensional piezoelectric constitutive scalar relations are given by 
        \blue{
    	\begin{alignat}{1}\label{eq:Constitutive_relations_scalar}
    	\left[\begin{array}{c}
            	\sigma_{11}\\
            	\sigma_{13}\\
            	D_3
        	\end{array}\right]= & \left[\begin{array}{ccc}
            	C_{11} & 0&-\gamma\\
            	0 & {2}C_{55}& 0\\
            	\gamma & 0&\nicefrac{1}{\beta}
            	\end{array}\right]\left[\begin{array}{c}
            	\epsilon_{11}\\\epsilon_{13}\\
            	E_3
        	\end{array}\right],
    	\end{alignat}
    	which takes care of the coupling between the mechanical and electromagnetic part. }
        
    %Displacement
        For the mechanical part let $v(z_1,t)$ denote the longitudinal displacement with respect to the normal of the cross-section along the $z_1-$axis and let $w(z_3,t)$ denote the transversal displacement along the $z_3-$axis. Then, the displacement vector $\vect{u}$ for an Euler-Bernoulli beam \cite{carrera2011beam} can be described by 
    	\begin{align}\label{eq:devormation_vvect}
    %	\begin{subequations}        
    	\vect{u}= & [\begin{array}{ccc}
        	    v(z_1,t)-\vect{z}_3\tfrac{\partial}{\partial z_1}w(z_1,t), & 0, & w(z_1,t)]^{T}
        	\end{array}.
        %	\end{subequations}
    	\end{align}
    	where $\vect{z}_3$ denotes the unit-vector in the $z_3$-direction.
       
    %Strain
    \blue{
        The strain is derived form \eqref{eq:devormation_vvect} is as follows,
        \begin{align}\label{eq:strainequations}
            \begin{split}
                    \epsilon_{11}&=\frac{\partial}{\partial z_1}\left( v(z_1)-\vect{z}_3\frac{\partial}{\partial z_1}w(z_1) \right),\\
                    \epsilon_{13}&=0,
                \end{split}
        \end{align}
        where $\epsilon_{13}$ represents the shear-strain, which is not included in the Euler-Bernoulli beam theory \cite{carrera2011beam}. Therefore, the strain, stress, and electric displacement of interest are given by the first row of \eqref{eq:strainequations}, the first row of \eqref{eq:Constitutive_relations_scalar}, and the the third row of \eqref{eq:Constitutive_relations_scalar}, respectively. }
        
        % one-dimensional strain $\epsilon_{11}$ and the stress $\sigma_{11}=C_{11}\epsilon_{11}-\gamma E_3$ along the $z_1-$axis.\\
    
    %Energy -> Lagrangian
        Let $T$ and $V$ denote the kinetic and potential energies, $W$ denotes the work applied to the system using a current, and let the subscripts $e$ and $m$ refer to the electrical and mechanical domains, respectively, then the Lagrangian    
        \begin{align}\label{eq:Lagrangian_standard}
            \mathcal{L}:=T_{m}+T_{e}-(V_{m}+V_{e})+W
        \end{align}
         can be used in the derivation of the set of dynamical equations for the piezoelectric beam. To use \eqref{eq:Lagrangian_standard} we require to treat Maxwell's equations to provide us with the necessary components.
    	%
    %Maxwell's equation
        Therefore, let the vectors $\vect{B}$, $\vect{H}$, and $\vect{J}_i$ represent respectively the magnetic field, magnetic field intensity, and impressed current density. Furthermore, let the magnetic permeability of the material by $\mu$, and the charge density by $\sigma_v$. Then, Maxwell's equations  \cite{eom2013maxwell} are described by the four laws;
        	\begin{subequations}
    	\begin{align}
        	\nabla\times \vect{H}&=\vect{J}_i+\frac{\partial \vect D}{\partial t},\label{eq:Ampereslaw} \\
        	\nabla\cdot\vect D&=\sigma_v, \label{eq:GausssElectric} \\
        	\nabla\times \vect{E}&=-\frac{\partial \vect{B}}{\partial t},  \label{eq:Faraday'sLaw} \\
        	\nabla\cdot\vect{B}&=0,  \label{eq:GausssMagnetic}
    	\end{align}
    	\end{subequations}
    	respectively, Ampere's law \eqref{eq:Ampereslaw} that describes the generation of a magnetic field by current densities, Gauss's law \eqref{eq:GausssElectric} of electric-fields, Faraday's law \eqref{eq:Faraday'sLaw} for time varying magnetic fields, and Gauss's law \eqref{eq:GausssMagnetic} of magnetism. Additional to Maxwell's equation we require the following two constitutive relations
    	\begin{align}\label{eq:constrel_Maxwell}
        	\begin{split}
            	\vect{D}&=\beta\vect{E},\\
            	\mu\vect{H}&=\vect{B},
        	\end{split}
    	\end{align}
    	used for material that admits characteristics prescribed by Maxwell's equations. Using \eqref{eq:Ampereslaw}-\eqref{eq:GausssMagnetic} and \eqref{eq:constrel_Maxwell} we can propose a novel way of modelling current actuated piezoelectric systems. 
    %Treatment of Maxwell's equations
        %FD, QS & S. explain where the assumptions originate from
         The new approach of deriving the dynamics for a current actuated piezoelectric actuator follows a procedure that is inspired by the approach of deriving dynamics for voltage actuated piezoelectric actuators \cite{MenOSIAM2014}. However, in literature, this approach has not been considered for current actuated models.\\

          \blue{We start by defining the magnetic-flux $\Phi$ as follows,
    	\begin{align}\label{eq:magneticflux_flow3}
        	\dot\Phi(z_1):=\int_{0}^{z_1}\dot{B_2}(\xi)d\xi,%=:,=E_3&=
    	\end{align}
    	where $B_2$ is the magnetic field component for $z_2$. From \eqref{eq:magneticflux_flow3} the following expressions 
    	\begin{subequations}\label{eq:magneticfluxses}
        	\begin{align}
            	%\begin{split}
                	\frac{\partial}{\partial z_1}\dot{\Phi}(z_1)&=\dot{B}_2(z_1),\label{eq:magneticfluxexpression1}\\
                	\frac{\partial}{\partial z_1}{\Phi}(z_1)&={B}_2(z_1),\label{eq:magneticfluxexpression2}
            %	\end{split}
            \end{align}
    	\end{subequations}	
    	are derived, which relates the magnetic-flux and the magnetic field as a function of the spatial variable $z_1$.
    	 %where, the third line is a direct consequence of Faraday's law \eqref{eq:Faraday'sLaw}.
    	 Next, we reduce Faraday's law \eqref{eq:Faraday'sLaw} to the scalar equation
    	 \begin{align}\label{eq:Farady_scalar}
    	     \frac{\partial E_3}{\partial z_1}=\frac{\partial B_2}{\partial t},
    	 \end{align}
    	 where we made use of $E_1=E_2=0$. Recall that, the electric-field from the applied current is only generated in the $E_3$ component. Then, from \eqref{eq:Farady_scalar} with the use of \eqref{eq:magneticfluxexpression1} the relation between the electric-field and flux is given by}]
    	 \begin{align}\label{eq:electricfluxandfield}
    	     \dot\Phi(z_1)&=E_3(z_1).
    	 \end{align}
    	 The expression given in \eqref{eq:electricfluxandfield} is used during the derivation of the dynamics using Hamilton's principle \cite{lanczos1970variational}, applied to the Lagrangian \eqref{eq:Lagrangian_standard}. \blue{From now on we omit the spatial dependency of the variables to enhance the readability. Additionally, we denote $\frac{\partial}{\partial z_1}\phi$ as $\phi_{z_1}$, unless otherwise appropriate.}\\
    	
    % 	Moreover, we will recover the characteristic equation \eqref{eq:dynamicsimple} in the dynamical equations that govern the piezoelectric beam, obtained though the application of Hamilton's principle \cite{lanczos1970variational}.\\
        
        % The novel approach for current actuation circumvents the need for a Coulomb gauge and the additional requirements on the electric scalar and magnetic vector potentials as opposed in the existing procedure. For current actuated piezoelectric actuators we
        %
    %Energy in detail
        % FD (& QS?)
      \blue{The energies associated with \eqref{eq:Lagrangian_standard} for the  novel current actuated piezoelectric actuator are presented next. The relation between the magnetic-flux and electric-field \eqref{eq:electricfluxandfield} plays an intrinsic part in the derivation of the dynamical model.}  Let $\rho$ denote the mass density, $V$ the volume of the beam, and let $A,I,I_0$ represent the cross-section and inertias of the beam. Then, the following energies for the piezoelectric beam are given by
      \blue{
      \begin{subequations}\label{eq:FD_energies}
	\begin{align}
		%\begin{split}
			T_{m}&=\frac{1}{2}\int_V \rho(\dot{\vect{u}}\cdot \dot{\vect{u}})~ dV \label{eq:FD_energiesTm} \\ 
			    &=\frac{1}{2}\int_{0}^{\ell}\brackets{\rho\braces{A\dot{v}^2-2I_0\dot{w}_{z_1}\dot{v}+I\dot{w}^2_{z_1}+A\dot{w}^2}}dz_1, \nonumber \\
			V_{m}&=\frac{1}{2}\int_V \vect{\sigma} \cdot \vect{\epsilon}~ dV \nonumber\\ &=\frac{1}{2}\int_V\left[\sigma_{11}\epsilon_{11} \right]~dV \label{eq:FD_energiesVm}\\
			   &=\frac{1}{2}\int_{0}^{\ell}\left[C_{11}\braces{Av_{z_1}^2+Iw_{z_1z_1}^2-2I_0v_{z_1}w_{z_1z_1}} \right. \nonumber\\
			    &\quad\left.-\gamma\dot{\Phi}\braces{Av_{z_1}-I_0w_{z_1z_1}}\right]dz_1,\nonumber\\
			T_{e}&=\frac{1}{2}\int_V \vect{D}\cdot \vect{E}~ dV \nonumber\\ 
			   &=\frac{1}{2}\int_V {D_3} {E_3}~ dV \label{eq:FD_energiesTe}\\ &=\frac{1}{2}\int_{0}^{\ell}\brackets{\frac{1}{\beta}A\dot{\Phi}^2+\gamma\dot{\Phi}\braces{Av_{z_1}-I_0w_{z_1z_1}} }dz_1,\nonumber\\
			V_{e}&=\frac{1}{2}\int_V \vect{H}\cdot \vect{B}~ dV \nonumber\\ 
			&=\frac{1}{2}\int_V \frac{1}{\mu} B_2^2~ dV \label{eq:FD_energiesVe}\\
			    &=\frac{1}{2}\int_{0}^{\ell}\brackets{\frac{1}{\mu}A\Phi_z^2}dz_1,\nonumber
		%\end{split}
	\end{align}
	\end{subequations}
	where we used \eqref{eq:devormation_vvect} in \eqref{eq:FD_energiesTm}, in \eqref{eq:FD_energiesVm} we made use of \eqref{eq:strainequations}, the first row of \eqref{eq:Constitutive_relations_scalar} and \eqref{eq:electricfluxandfield}, in \eqref{eq:FD_energiesTe} we made use of the third row of \eqref{eq:Constitutive_relations_scalar} and \eqref{eq:electricfluxandfield}, and in \eqref{eq:FD_energiesVe} we made use of \eqref{eq:constrel_Maxwell} and \eqref{eq:magneticfluxexpression2}.} Moreover, the  cross-sections and inertia's in \eqref{eq:FD_energies} are given by
	\begin{align}\label{eq:crossseactionalA}
    	\begin{split}
        	A&:=\int_{h_a}^{h_b}\int_{-g_b}^{g_b}dz_2dz_3=2g_b(h_b-h_a),\\
        	I&:=\int_{h_a}^{h_b}\int_{-g_b}^{g_b}{z}_3^2dz_2dz_3=\frac{2}{3}g_b(h_b^3-h_a^3),\\ %=\int_{h_a}^{h_b}2g_b{z}_3^2dz_3=\left[\frac{2}{3}g_b{z}_3^3\right]_{h_a}^{h_b}
        	I_0&:=\int_{h_a}^{h_b}\int_{-g_b}^{g_b}\vect{z}_3dz_2dz_3=\left[g_b{z}_3^2\right]_{h_a}^{h_b}=g_b(h_b^2-h_a^2). %\int_{h_a}^{h_b}2g_b\vect{z}_3dz_3=
    	\end{split}
	\end{align}
    \blue{
    It is interesting to point out that the coupling between the mechanical and electrical domain in the energies is present in the potential energy of the mechanical part and the kinetic energy of the electromagnetic part. This is an immediate consequence of the altered Lagrangian, by means of a Legendre transformation, where the kinetic and potential energies of the electromagnetic part are switched when compared to the Lagrangian for voltage actuated piezoelectric actuators, see \cite{morrisozer2016modelingcurrent}.
    The definition of the magnetic flux \eqref{eq:magneticfluxses} and relation to the electric-field \eqref{eq:electricfluxandfield} ensure the appropriate behavior of the kinetic and potential energies of the electromagnetic part. Moreover, we circumvent the necessity of using magnetic vector potentials and a gauge function to obtain a well-posed set of equations, as is described in \cite{MenOCDC2014} for a piezoelectric beam and \cite{OzerTAC2019} for piezoelectric composites. 
    }

	The distributed current source $\mathcal{I}(t)$ acts on the surface of the piezoelectric layer where the electrodes (with surface $A_q$) are located. The surface $A_q$ is in the $z_1z_2$ plane, see Fig \ref{fig:piezoelectriccomposite}, and the applied current acts in the normal of $A_q$, i.e. in the $z_3$ direction. Therefore, the current input is related to the current density through
    \begin{align}\label{eq:current_actuation}
    \begin{split}
        J_3(z_1)&=\lim\limits_{A_q\to 0}\frac{1}{A_q}\mathcal{I}(t)\\
                &=\frac{1}{2g_{b}}\mathcal{I}(t),
        \end{split}
    \end{align}
	and acts through the external work 
% 	Additionally, we require the work term The current $\mathcal{I}(t)$, acts 
	\begin{align}\label{eq:workterm_current}
		\begin{split}
		W&=\int_{V}\frac{1}{2g_{b}}\mathcal{I}(t)\Phi dV\\
		&=\int_0^\ell (h_b-h_a)\mathcal{I}(t)\Phi dz_1,
	    \end{split}
	\end{align}
	as a homogeneous current source. \\
        
    %Governing equations
    % The variations with respect to $q=\col(v,w_{z_1},\Phi)$ of the definite integral $J:=\int_{t_0}^{t_1}\mathcal{L}~dt$ composed of the energies \eqref{eq:FD_energies}, work expression \eqref{eq:workterm_current}, and inertia's \eqref{eq:crossseactionalA}, leads to
    Applying Hamilton's principle \cite{lanczos1970variational} to the definite integral 
    \begin{align*}\small%\label{eq:definite_integral}
    \begin{split}
        {J}& :=\int_{t_{0}}^{t_{1}}  \mathcal{L}~d t\\
        &=\int_{t_{0}}^{t_{1}} \frac{1}{2} \int_{0}^{\ell} \rho\left(A \dot{v}^{2}+I \dot{w}_{z}^{2}-2 I_{0} \dot{v} \dot{w}_{z}\right)+\frac{1}{\beta} A \dot{\Phi}^{2}+\rho A \dot{w}^{2}\\
        &+2 \gamma \dot{\Phi}\left(A v_{z}-I_{0} w_{zz}\right)-C\left(A v_{z}^{2}+I w_{zz}^{2}-2 I_{0}v_{z} w_{zz}\right)\\
        &-\frac{1}{\mu} A \Phi_{z}^{2}+\left(h_{b}-h_{a}\right) I(t) \Phi ~d z_1 ~d t,
    \end{split}
\end{align*}
    % $J:=\int_{t_0}^{t_1}\mathcal{L}~dt$ with 
    where the Lagrangian \eqref{eq:Lagrangian_standard} is composed of the energies \eqref{eq:FD_energies}, work expression \eqref{eq:workterm_current}, and inertia's \eqref{eq:crossseactionalA} leads to the variations
    %\begin{subequations}
    \begin{align}
         \begin{split}\label{eq:variations_actuator} 
        \delta J &:=\int_{t_0}^{t_{1}} \int_{0}^{l}\left(-p A \ddot{v}+p I_{0} \ddot{w}+C A v_{z_1 z_1}-CI_{0} w_{z_1 z_1 z_1}\right.\\&\left.\quad\quad-\gamma A \dot{\Phi}_{z_1} \right)\delta v+\left(-\rho I \ddot{w}_{z_1 z_1}+\rho I_{0} \ddot{v} + C I w_{z_1 z_1 z_1}\right.\\&\left.\quad\quad-C I_{0} v_{z_1 z_1 } +\gamma I_{0} \dot{\Phi}_{z_1 }\right) \delta w_{z_1} +(-\rho A \ddot{w}) \delta w \\
        &\quad\quad+\left(-\frac{1}{\beta} A \ddot{\Phi}-\gamma\left(A \dot{v}_{z_1}-I_{0} \dot{w}_{z_1 z_1}\right)\right.\\&\left.\quad\quad+\frac{1}{\mu_{1}} A \Phi_{z_1 z_1}+\left(h_{b}-h_{a}\right) I(t) \right) \delta \Phi ~d z_1 ~d t\\
        &\quad\quad +\left[\left(-CA v_{z_1}+C I_{0} v_{z_1 z_1}-\gamma A \dot{\Phi}\right) \delta v\right]_{0}^{l} \\
         &\quad\quad +\left[\left(-C I w_{z_1 z_1}+C I_{0} v_{z_1}-\gamma I_{0} \dot{\Phi}\right) \delta w_{z_1}\right]_{0}^{l}\\
        & \quad\quad+\left[\left(-\frac{1}{\mu} A \Phi_{z_1}+\gamma \left(A v_{z_1}-I_{0} w_{z_1 z_1}\right)\right) \delta \Phi\right]_{0}^{l}
            \end{split}
    \end{align}
  %  \end{subequations}
     {with respect to the variations in the generalized coordinates $q=\col(v,w_z,\Phi)$} and using integration by parts (IBP). From \eqref{eq:variations_actuator}, we have that the dynamical PDE's describing the behaviour of the fully dynamic electromagnetic piezoelectric actuator, as follows
    	\begin{align}\label{eq:PDE_currentactuator_FD}\small
    	\begin{split}
    	&\rho\braces{A\ddot{v}-I_0\ddot{w}_{z_1}}=C_{11}\braces{Av_{z_1z_1}-I_0w_{z_1z_1z_1}}-\gamma A\dot{\Phi}_{z_1}\\
    	&\rho\braces{I\ddot{w}_{z_1}-I_0\ddot{v}}=C_{11}\braces{Iw_{z_1z_1z_1}-I_0v_{z_1z_1}}+\gamma I_0\dot{\Phi}_{z_1}\\
    	&\frac{1}{\beta}A\ddot{\Phi}=\frac{1}{\mu}A\Phi_{z_1z_1}-\gamma\braces{A\dot{v}_{z_1}-I_0\dot{w}_{z_1z_1}}-\braces{h_b-h_a}\mathcal{I}(t),
    	\end{split}
    	\end{align}
    	on the spatial domain $z_1\in[0,\ell]$, with essential boundary conditions $v(0)=\dot{v}(0)=0$, $w_{z_1}(0)=\dot{w}_{z_1}(0)=0$, $\Phi(0)=\dot{\Phi}(0)=0$ and natural boundary conditions $C_{11}A v_{z_1}(\ell)-C_{11}I_0w_{z_1z_1}(\ell)-\gamma A \dot{\Phi}(\ell)=0$, $C_{11}Iw_{z_1z_1}(\ell)-C_{11}I_0v_{z_1}(\ell)+\gamma I_0  \dot{\Phi}(\ell)=0$, $\frac{1}{\mu}A\Phi_{z_1}(\ell)-\gamma(A\dot{v}(\ell)-I_0\dot{w}_{z_1}(\ell))=0$. 
    	
    \blue{
        \begin{remark}\label{rem:on_the_EBBT}
            The dynamics \eqref{eq:PDE_currentactuator_FD} are derived using Hamilton's principle, where the variations are taken with respect to the variations in the generalized coordinates $q=\col(v,w_{z_1},\Phi)$ and produces the algebraic equation $\rho A\ddot{w} = 0$, see \eqref{eq:variations_actuator}.  Another approach would be using the generalized coordinates $\tilde{q}=col(v,w,\Phi)$, which result in the classical Euler-Bernoulli beam model with a fourth-order spatial derivative on $w$ and additionally the algebraic equations $\rho I\ddot{w}_{z_1}-\rho I_0 \ddot{v}=0$. The model using $\tilde{q}$ is derived from \eqref{eq:variations_actuator} by using IBP on the potential energy part associated with the variations $\delta w_{z_1}$.             The motivation of using the generalized coordinates $q$ comes from the deflection \eqref{eq:devormation_vvect}, produced by the assumption of the linear Euler-Bernoulli beam theory, given in the variables $v$ and $w_{z_1}$. The second row of \eqref{eq:PDE_currentactuator_FD} can be seen as a dynamical equations for the infinitesimal translation on the $z_1z_2-$plane. When the variations are taken with respect to $\tilde{q}$ the dynamical equation for the transverse displacement, i.e. contour, is obtained. The model with dynamical equation for the infinitesimal translation, using the variations $q$ has the following advantage; a third order spatial derivative on the transverse displacement is obtained, which corresponds to a first order spatial derivative with respect to the energy variable $w_{z_1z_1}$ and yields, therefore, a model with beneficial structure for analysis, while being able to determine the infinitesimal translation through $w_{z_1}$.\\
            With use of the Timoshenko beam theory, we can show that the dynamical equation for the infinitesimal translation of \eqref{eq:PDE_currentactuator_FD} satisfies the Euler-Bernoulli beam a priori assumptions, which we will touch upon here. The Euler-Bernoulli beam theory is based on the following three a priori assumptions \cite{carrera2011beam};
            \begin{enumerate}
                \item the cross-section is rigid on its plane,
                \item the cross-section rotates around a neutral surface remaining plane,
                \item the cross-section remains perpendicular to the neutral surface during deformation.
            \end{enumerate}
            The first two assumptions are shared with the Timoshenko beam theory \cite{carrera2011beam}, while the third assumptions is relaxed in a Timoshenko beam. Therefore, the Timoshenko beam model contains also the rotation of the beam's cross section $\phi(z_1,t)$ by means of shear stresses. Let $K$ denote the shear constant, then the Timoshenko beam equation can be written as,
        \begin{align}\label{eq:TimoshenkoBeam}
            \begin{split}
            \rho A \ddot{w}&=K\braces{\frac{\partial^2 w}{\partial z_1^2}-\frac{\partial \phi}{\partial z_1}}\\
            \rho I\ddot{\phi}&= C_{11}I \frac{\partial^2 \phi}{\partial z_1^2}-K\braces{\phi-\frac{\partial w}{\partial z_1}},
             \end{split}
        \end{align}
            see for instance \cite{carrera2011beam,Macchelli2004modelingTimoshenkobeam}. By imposing assumption three in \eqref{eq:TimoshenkoBeam}, using the constraint $\phi=w_{z_1}$, we obtain the dynamical equations
            \begin{align}\label{eq:TimoshenkoBeam_reduced}
                \begin{split}
                    \rho A \ddot{w}&=0\\
                    \rho I\ddot{w}_{z_1}&= C_{11}I \frac{\partial^3 w}{\partial z_1^3},
                 \end{split}
        \end{align}
            where the influence of shear stresses is mitigated.
            It can be seen that the first row of \eqref{eq:TimoshenkoBeam_reduced} is the algebraic equation in \eqref{eq:variations_actuator}.  Moreover, the second row of \eqref{eq:TimoshenkoBeam_reduced} returns in row two of \eqref{eq:PDE_currentactuator_FD}, as the mechanical beam part. Note that \eqref{eq:TimoshenkoBeam_reduced} is in the beam configuration and \eqref{eq:PDE_currentactuator_FD} in the actuator configuration. Therefore, we can make the similarity more clear by taking centroidal coordinates $h_b=-h_a$, such that $I_0\rightarrow 0$ to obtain the decoupled stretching and bending equations, i.e. the piezoelectric beam equations. Consequently, the derived equation for bending coincides with the deflecting beam equation in \eqref{eq:TimoshenkoBeam_reduced}. Taking centroidal coordinates will result in the dynamics of a piezoelectric beam. Letting $I_0\rightarrow 0$ can be done in the dynamical equations of piezoelectric actuators (e.g. \eqref{eq:PDE_currentactuator_FD}), or directly in the energies making up the Lagrangian (e.g. \eqref{eq:FD_energies}), see \cite{MenOSIAM2014} for a voltage actuated beam. 
        \end{remark}
        }
   
        The total energy of the actuator \eqref{eq:PDE_currentactuator_FD} is given by
    	\begin{align}\label{eq:Hamiltonian_current_FD}\small
    	\begin{split}
    	\mathcal{H}&_{FD}(t)=\frac{1}{2}\int_{0}^{\ell}\left[\rho\braces{A\dot{v}^2+I\dot{w}_{z_1}^2-2I_0\dot{v}\dot{w}_{z_1}}+\frac{1}{\beta}\dot{\Phi}^2\right.\\ &\quad\left.+C_{11}\braces{Av_{z_1}^2+Iw_{z_1z_1}^2-2I_0v_{z_1}w_{z_1z_1}}+\frac{1}{\mu}\Phi_{z_1}^2\right]dz_1,
    	\end{split}
    	\end{align}
    	obtained by summation of the energies \eqref{eq:FD_energies} and solving for the algebraic equation. \\
    
    \begin{remark}
    The external work \eqref{eq:workterm_current} is a direct consequence from Ampere's law \eqref{eq:Ampereslaw} and becomes evident by reducing \eqref{eq:Ampereslaw} for the case of the piezoelectric actuator. Then, we obtain the scalar equation,    
	    \begin{align}\label{eq:dynamicsimple}
		\frac{1}{\beta}\frac{\partial^2}{\partial t^2}\Phi&=\frac{1}{\mu}\frac{\partial^2}{\partial z_1^2}\Phi-\gamma\frac{\partial}{\partial t}(v_{z_1}-z_3w_{z_1z_1})-J_3,
	\end{align}
    where we made use of \eqref{eq:magneticfluxexpression1}, \eqref{eq:magneticfluxexpression2} and \eqref{eq:Constitutive_relations_scalar}. Combining \eqref{eq:dynamicsimple}, \eqref{eq:current_actuation} and integrating both sides with respect to the cross-section $A$, see \eqref{eq:crossseactionalA}, we obtain the same expression as the third equation of \eqref{eq:PDE_currentactuator_FD}. This shows the validity of the work term \eqref{eq:workterm_current}. %{Moreover, investigating the relationship between the physical quantities of the energies \eqref{eq:FD_energies} and work \eqref{eq:workterm_current} shows that the physical dimensions are the same.}
    \end{remark}

    	The dynamics of the current actuated piezoelectric composite, depicted in Fig \ref{fig:piezoelectriccomposite}, is obtained by interconnecting the PDE \eqref{eq:PDE_currentactuator_FD} with equations associated with a purely mechanical substrate. As we have shown in Remark \ref{rem:on_the_EBBT} that the mechanical part of \eqref{eq:PDE_currentactuator_FD} reflects the Euler-Bernoulli beam theory, we continue by employing the same approach, as in \cite{Multi_MacchellivdSchaft2004,VenSSIAM2014}. 
    % 	We use the method described in \cite{Multi_MacchellivdSchaft2004,VenSSIAM2014} to obtain the equations of the composite system. 
    	Therefore, let the piezoelectric constant $\gamma\rightarrow 0$ in \eqref{eq:PDE_currentactuator_FD} and \eqref{eq:Hamiltonian_current_FD} and let the subscript $p$ and $s$ correspond to the piezoelectric actuator and substrate, respectively to define the coefficients
    	\begin{eqnarray}\label{eq:new_energycoefficients}
        	%\begin{split}
        		&\rho_A:=\rho_pA_p+\rho_s A_s,\quad  &C_A:=C_pA_p+C_sA_s, \nonumber \\
        		&\rho_I:=\rho_pI_p+\rho_s I_s, \quad &C_I:=C_pI_p+C_sI_s,\\
        		&\rho_{I_0}:=\rho_p I_0, \quad &C_{I_0}:=C_{11,p}I_0.\nonumber
        %	\end{split}{}
    	\end{eqnarray} 
    	Then, with use of \eqref{eq:new_energycoefficients} the fully dynamic electromagnetic current actuated piezoelectric composite, depicted in Fig \ref{fig:piezoelectriccomposite}, is written as
    	\begin{align}\label{eq:PDE_currentcomposite_FD}\small
    	    \begin{split}
            	&\rho_A\ddot{v}-\rho_{I_0}\ddot{w}_{z_1}=C_A v_{z_1z_1}-C_{I_0}w_{z_1z_1z_1}-\gamma A_p\dot{\Phi}_{z_1}\\
            	&\rho_I \ddot{w}_{z_1}-\rho_{I_0}\ddot{v}=C_I w_{z_1z_1z_1}-C_{I_0}v_{z_1z_1}+\gamma I_0\dot{\Phi}_{z_1}\\
            	&\frac{1}{\beta}A_p\ddot{\Phi}=\frac{1}{\mu}A_p\Phi_{z_1z_1}-\gamma\braces{A_p\dot{v}_{z_1}-I_0\dot{w}_{z_1z_1}}-\braces{h_b-h_a}\mathcal{I}(t),
        	\end{split}
    	\end{align}
    	on the spatial domain $z_1\in[0,\ell]$, with essential boundary conditions as in \eqref{eq:PDE_currentactuator_FD} and natural boundary conditions $C_A v_z(\ell)-C_{I_0}w_{zz}(\ell)-\gamma A_p \dot{\Phi}(\ell)=0$, $C_Iw_{zz}(\ell)-C_{I_0}v_{z}(\ell)+\gamma I_0  \dot{\Phi}(\ell)=0$, $\frac{1}{\mu}A_p\Phi_z(\ell)-\gamma(A_p\dot{v}(\ell)-I_0\dot{w}_z(\ell))=0$.
    	
    	and total energy
    	\begin{align}\label{eq:Hamiltonian_PDE_fd_comp}\small
        	\begin{split}
            	\mathcal{H}(t)&=\frac{1}{2}\int_0^\ell\left[\rho_A \dot{v}^2+\rho_I\dot{w}^2_{z_1}-2\rho_{I_0}\dot{v}\dot{w}_{z_1}+\frac{1}{\beta}A_p\dot{\Phi}^2\right.\\ &\quad\left.+C_Av_{z_1}^2+C_Iw_{z_1z_1}^2-2C_{I_0}v_{z_1}w_{z_1z_1}+\frac{1}{\mu}A_p\Phi_{z_1}^2\right]dz_1.
        	\end{split}
    	\end{align} 
     This concludes the model derivation of the novel current actuated piezoelectric composite, which is analyzed further in the upcoming sections. Here we will continue by illustrating the similarities and differences with existing models in the literature.\\  
        % Interestingly, in the last equation of \eqref{eq:PDE_currentactuator_FD} and \eqref{eq:PDE_currentcomposite_FD} we have recovered a similar expression as \eqref{eq:dynamicsimple}, tailor made for respectively a piezoelectric beam and piezoelectric composite.\\
        
        %Insert comparison to Ozkan's TAC paper..
       The newly proposed piezoelectric actuator/composite model differs from the existing fully dynamic electromagnetic current actuated piezoelectric actuator/composite models, by treating Maxwell's equations differently.\\
        
        The treatment of Maxwell's equations that leads to the definition of the magnetic-flux density and subsequently to the novel dynamical equations for current actuated piezoelectric beams and composites is inspired by the treatment of Maxwell's equations for voltage actuated piezoelectric beams. See for instance \cite{MenOSIAM2014} for a detailed exposition. In \cite{MenOSIAM2014} a similar approach results in the definition of the charge by integrating the electric displacement, resulting in a voltage actuated piezoelectric beam. The quasi-static current actuated model presented in \cite{VenSSIAM2014} suggests the existence of a purely current actuated piezoelectric actuator/composite under the fully dynamic electromagnetic field assumption. In fact, with some effort, it can be shown that the fully dynamic current actuated system presented in this work and the current actuated quasi-static piezoelectric system presented in \cite{VenSSIAM2014} are related. More precisely, the system obtained by reducing the electromagnetic assumption to the quasi-static situation (i.e. let $\frac{\partial}{\partial z_1}B_2(=\Phi_{z_1z_1})\rightarrow 0$) in \eqref{eq:PDE_currentactuator_FD}, and linearization combined with the reduction of the beam theory (i.e. from the Timoshenko to Euler-Bernoulli beam theory) of the current actuated quasi-static piezoelectric system presented in \cite{VenSSIAM2014}, result in coinciding systems. The definition of the magnetic-flux \eqref{eq:magneticfluxexpression1} is crucial for the derivation of the current actuated fully dynamic piezoelectric actuator and composite model \eqref{eq:PDE_currentactuator_FD} \eqref{eq:PDE_currentcomposite_FD}.         In \cite{voss2011CDC}, it has been shown that the approximations of the quasi-static current actuated model does not satisfy the necessary condition for stabilizability. 

        % We would like to point out that the fully dynamic current actuated model presented in \cite{VenSSIAM2014} is not mathematically related to the quasi-static system in \cite{VenSSIAM2014} nor the fully dynamic system presented in this work. We will make this point more clear in the next alinea.\\
        
        % Up to now, there existed 
       The two existing principles to derive current actuated piezoelectric beams and composites result either from taking a charge actuated piezoelectric beam and add a dynamical equation on the boundary,
        or utilizing magnetic vector potentials $\Tilde{\vect{A}}$. The first approach results  in a current-through-the-boundary type model, by mathematically adding an integrator for the charge $Q$ on the boundary, i.e. $\mathcal{I}(t)=\tfrac{d}{dt}Q(t)$ and is employed in  \cite{OzerTAC2019,VenSSIAM2014}. More precisely, the fully dynamic current actuated system in \cite{VenSSIAM2014} exploits the boundary ports of the port-Hamiltonian formalism, mimicking the charge integrator used in \cite{OzerTAC2019}. Physically, either cases correspond to the use of some sort of electric circuitry. The charge and from there resulting current actuated systems have similar stabilizability properties. Besides, current-through-the boundary type systems can utmost asymptotically stabilize the system due to the bounded input operator, see \cite{OzerTAC2019}.\\  
        
        The alternative existing approach, resulting a purely current actuated system, uses magnetic vector potentials $\Tilde{A}$ as per Gauss's magnetic law \eqref{eq:GausssMagnetic} it is evident that there exist magnetic vector potentials, such that
        \begin{align}\label{eq:magvectpot}
	        \vect{B}=\nabla\times \vect{\Tilde{A}}.
	    \end{align}
        Substituting \eqref{eq:magvectpot} into Faraday's law \eqref{eq:Faraday'sLaw} results in an expression of the electric-field of a piezoelectric actuator as follows,
    	\begin{align}\label{eq:Efield}
    	    \vect{E}&=-\nabla {\varphi}-\frac{\partial }{\partial t}\vect{\tilde{A}},
    	\end{align}
    	where $\varphi$ denotes the electric scalar potential. As a result, the electric scalar potential ${\varphi}$ and magnetic vector potentials $\vect{\tilde{A}}$ are not uniquely defined \cite{MenOCDC2014}. Therefore, a gauge function, such as the Coulomb gauge or Lorentz gauge, is required to uniquely define $\varphi$ and $\vect{\tilde{A}}$. Although these gauge conditions do not influence the electric-field $\vect{E}$ and magnetic field $\vect{B}$, they do have their specific characteristics \cite{eom2013maxwell} and influence the dynamic equations governing the piezoelectric actuator, see for instance \cite{MenOCDC2014,OzerTAC2019}. In \cite{OzerTAC2019}, it has been shown that the purely current actuated fully dynamic piezoelectric system using electric vector potentials and a gauge condition lack the stabilizability property.\\ %, making them more involved for further analysis, see for instance .\\
    	
    % 	Next to these two existing methods of deriving current actuated piezoelectric systems, we presented a third approach in this
        In our work, the definition of the magnetic-flux \eqref{eq:magneticflux_flow3} is pivotal. Our approach results in a purely current actuated piezoelectric system and circumvents the necessity of a gauge condition. Moreover, the new approach is analogous to the derivation of voltage actuated piezoelectric actuators, see for instance \cite{MenOSIAM2014} and appends the existing framework of modelling piezoelectric actuators. In the upcoming sections, we attempt to show the usefulness of the developed novel current actuated piezoelectric composite model by showing it is well-posedness and investigate the stabilizability properties for two approximation methods.

%*****************___________________*****************
\section{Well-posedness}
%State the used well-posedness theorems.
%Then apply to the model(s)
    In this section, we show that the obtained dynamical system \eqref{eq:PDE_currentcomposite_FD} is well-posed. More specifically, we define an operator associated with the PDE \eqref{eq:PDE_currentcomposite_FD} and show, using the Lumer-Philips Theorem \cite{lumer1961}, that the operator is, in fact, a generator of a strongly continuous semigroup of contractions. 
    \begin{theorem}{\bf (Lumer-Phillips theorem)}\label{theorem:Lumer-Phillips}
    The closed and densely defined operator $A$ generates a strongly continuous semigroup of contractions $T(t)$ on $X$, if and only if both $A$ and its adjoint $A^*$ are dissipative, i.e.
    \begin{align}\label{disspative_operator}
    \begin{matrix}
    \angles{A \vect{x},\vect{x}}_X&\leq &0, \\
        \angles{A^*{\vect{x}},\vect{x}}_X&\leq &0.
    \end{matrix}
        % \begin{split}
        %   \angles{A\vect{x},\vect{x}}_X&\leq 0,\\
        % \angles{A^*\vect{x},\vect{x}}_X&\leq 0.
        % \end{split}
    \end{align}
%		    \begin{proof} See \cite{lumer1961}.   \end{proof}
    \begin{proof} For the proof, see \cite{lumer1961}.
    \end{proof}
\end{theorem}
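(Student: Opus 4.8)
The statement is the classical Lumer--Phillips characterization, so rather than reproduce the cited proof I outline how the two directions are obtained on the Hilbert space $X$. The \emph{necessity} direction is the easy one: suppose $A$ generates a $C_0$-semigroup of contractions $T(t)$. For $\vect{x}\in\Dom(A)$ the map $t\mapsto\norm{T(t)\vect{x}}_X^2$ is non-increasing, since $\norm{T(t)}\le 1$, and differentiating it at $t=0$ yields $2\angles{A\vect{x},\vect{x}}_X\le 0$, so $A$ is dissipative. Because on a Hilbert space the adjoint family $T(t)^{*}$ is again a $C_0$-semigroup of contractions with generator $A^{*}$, the identical argument applied to $T(t)^{*}$ shows that $A^{*}$ is dissipative as well.

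The \emph{sufficiency} direction carries the real content, and the plan is to verify the hypotheses of the Hille--Yosida generation theorem. Given that $A$ is closed and densely defined, it suffices to show that for every $\lambda>0$ the map $\lambda I-A:\Dom(A)\to X$ is a bijection with $\norm{(\lambda I-A)^{-1}}\le 1/\lambda$. First I would extract from dissipativity of $A$ the coercive a priori estimate
\begin{align*}
\lambda\norm{\vect{x}}_X^2 &\le \lambda\norm{\vect{x}}_X^2-\angles{A\vect{x},\vect{x}}_X=\angles{(\lambda I-A)\vect{x},\vect{x}}_X\\
&\le \norm{(\lambda I-A)\vect{x}}_X\,\norm{\vect{x}}_X,
\end{align*}
so that $\norm{(\lambda I-A)\vect{x}}_X\ge\lambda\norm{\vect{x}}_X$. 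This single estimate already gives injectivity of $\lambda I-A$, and, combined with closedness of $A$, shows that $\ran(\lambda I-A)$ is a \emph{closed} subspace of $X$ on which the inverse is bounded by $1/\lambda$.

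The main obstacle is then \emph{surjectivity}: the estimate above says nothing about whether $\ran(\lambda I-A)$ exhausts $X$, and this is precisely where the second dissipativity hypothesis enters. Here I would invoke the Hilbert-space duality $\ran(\lambda I-A)^{\perp}=\ker(\lambda I-A^{*})$, valid since $\lambda I-A$ is densely defined and closed; applying the same coercive estimate to the dissipative operator $A^{*}$ forces $\ker(\lambda I-A^{*})=\{\vect{0}\}$, so the orthogonal complement of the range is trivial and the range is dense. Dense together with closed yields $\ran(\lambda I-A)=X$, hence $\lambda I-A$ is boundedly invertible with the required resolvent bound, and the Hille--Yosida theorem then produces the contraction semigroup. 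The delicate interplay to highlight is exactly that dissipativity of $A$ alone secures closedness and injectivity, while dissipativity of $A^{*}$ is what upgrades the range from closed to all of $X$; neither condition is dispensable.
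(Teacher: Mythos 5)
Your proposal is correct, but it necessarily takes a different route from the paper, because the paper gives no argument at all: its ``proof'' is a one-line citation to Lumer and Phillips' 1961 article. What you have written is the standard Hilbert-space argument for this corollary of Lumer--Phillips (the form found, e.g., in Pazy or Curtain--Zwart): necessity by differentiating $t \mapsto \norm{T(t)\vect{x}}_X^2$ at $t=0$, applied also to the adjoint semigroup $T(t)^{*}$, which on a Hilbert space is again a strongly continuous contraction semigroup with generator $A^{*}$ (a point you correctly restrict to the Hilbert setting); sufficiency by verifying the Hille--Yosida conditions, where dissipativity of $A$ yields $\norm{(\lambda I - A)\vect{x}}_X \ge \lambda\norm{\vect{x}}_X$ (hence injectivity, the resolvent bound, and, via closedness of $A$, closedness of the range), while dissipativity of $A^{*}$ forces $\ker(\lambda I - A^{*}) = \ran(\lambda I - A)^{\perp} = \{\vect{0}\}$, upgrading the closed range to all of $X$. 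The division of labor you emphasize---$A$ dissipative for the a priori estimate, $A^{*}$ dissipative for surjectivity---is exactly the structural point that distinguishes this Hilbert-space version from the general Banach-space Lumer--Phillips theorem, where the range condition is assumed outright; the paper's citation hides this entirely, whereas your sketch makes the logical dependencies explicit, which is valuable given that Theorem~\ref{theorem:contractionFD} later invokes precisely the two dissipativity inequalities. Two cosmetic caveats: on a complex Hilbert space the inequalities and your chain of estimates should carry real parts, i.e.\ $\mathrm{Re}\angles{A\vect{x},\vect{x}}_X \le 0$, consistent with the convention implicit in the paper's statement; and for Hille--Yosida it would suffice to establish surjectivity of $\lambda I - A$ for a single $\lambda>0$ and propagate to all $\lambda>0$ by a resolvent/connectedness argument, though verifying every $\lambda>0$ directly, as you do, is equally valid.
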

        Hence, we establish the well-posedness in the sense of semigroup theory \cite{CurtainZwart1995introduction}. \\
        
        Let the length of the beam be $\ell=1$, with the spatial variable $z\in \left[0,\ell\right]$, and define $H^1_0(0,1):=\setof{f\in H^1(0,1)\mid f(0)=0}$, with $H^1(0,1)$ denoting the first order Sobolev space and let $L^2(0,1)$ denote the space of square integrable functions. Inspired by \eqref{eq:Hamiltonian_PDE_fd_comp}, define the linear space
	\begin{align*}
	\mathcal{X}=\left\{\vect{x}\in H^1_0(0,1)\times H_0^1(0,1)\times H^1_0(0,1)\right.\\
	\left.\times L^2(0,1) \times L^2(0,1) \times L^2(0,1) \right\}
	\end{align*}
	and inner product
	\begin{align}\small
	\begin{split}
	\angles{\vect{x},\vect{y}}_\mathcal{X}:&=\angles{\begin{bmatrix}	x_1 \\ x_2 \\ x_3 \end{bmatrix},\begin{bmatrix}	y_1 \\ y_2 \\ y_3 \end{bmatrix}}_{H^1}+\angles{\begin{bmatrix}	x_4 \\ x_5 \\ x_6 \end{bmatrix},\begin{bmatrix}	y_4 \\ y_5 \\ y_6 \end{bmatrix}}_{L^2}\\
	%		&=\int_{0}^{1}\brackets{C_A(x_1)_z(y_1)_z+C_I(x_2)_z(y_2)_z-C_{I_0}\braces{(x_1)_z(y_2)_z+(x_2)_z(y_1)_z}+\frac{1}{\mu}A_p(x_3)_z(y_3)_z+
	%		 {x}_4{y}_4+\rho_I{x}_5{y}_5-\rho_{I_0}\braces{{x}_4{y}_5+{x}_5{y}_4}+\frac{1}{\beta}A_p{x_6}^2}dz_1\\
	&=\int_{0}^{1}\left[C_Ax_1'y_1'+C_Ix_2'y_2'-C_{I_0}\braces{x_1'y_2'+x_2'y_1'}\right.\\
	&\quad\left.+\frac{1}{\mu}	A_px_3' z_3'+\rho_A {x}_4{y}_4+\rho_I{x}_5{y}_5\right.\\
	&\quad\left.-\rho_{I_0}\braces{{x}_4{y}_5+{x}_5{y}_4}+\frac{1}{\beta}A_p{x_6}y_6\right]dz,
	\end{split}
	\end{align}
	where the prime indicate the spatial derivative with respect to $z_1$. The inner product $\angles{.\ ,.}_\mathcal{X}$ induces the norm $\norm{\vect{x}}^2_{\mathcal{X}}=\angles{\vect{x},\vect{x}}_\mathcal{X}=2\mathcal{H}(t)$ on $\mathcal{X}$, see \eqref{eq:Hamiltonian_PDE_fd_comp}.
	For simplicity, denote the spatial variable $z:=z_1$, additionally let $\partial_{z}:=\frac{\partial}{\partial z}$, and define $\vect{x}:=\begin{bmatrix}v & w_{z} & \Phi & \dot{v} & \dot{w}_{z} & \dot{\Phi}	\end{bmatrix}^T$ to be the state, and current input $u(t)=\mathcal{I}(t)$.
	Then, the operator
	\begin{equation}\label{eq:opA_FD}\footnotesize
% 	\small
    	\begin{gathered}
        	\mathcal{A}:\Dom(\mathcal{A})\subset \mathcal{X}\rightarrow\mathcal{X},\\
            	\mathcal{A}=\left[
            	\begin{array}{ccc|ccc}
            	& & &1 & &\\
            	& && &1 &\\
            	& & & & &1\\ \hline
            	a_{41}\partial_{z}^2&-a_{42}\partial_{z}^2 & & & &-a_{46}\partial_{z}\\
            	-a_{51}\partial_{z}^2&a_{52} \partial_{z}^2& & & &a_{56}\partial_{z}\\
            	& & \tfrac{\beta}{\mu}\partial_{z}^2 &-\gamma\beta\partial_{z}&\gamma\beta\tfrac{I_0}{A_p}\partial_{z}&
            	\end{array}
            	\right]%=\left[\begin{array}{c|c}
            % 	& I_3\\ \hline
            % 	K_2\partial_{z} & \begin{matrix}
            % 	0 & K_a\partial_{z}\\ 
            % 	K_b\partial_z& 0
            % 	\end{matrix}
        	%\end{array}\right],
    	\end{gathered}
	\end{equation}
	with 
	
	\begin{align}\label{eq:domain_A}\small
        \begin{split}
            	\Dom(\mathcal{A})=&\left\{\vect{x}\in \mathcal{X}\mid 	C_A x_1'(1)-C_{I_0}x_2'(1)-\gamma A_p x_6(1)=0, \right.\\
	    &~\left.C_Ix_2'(1))-C_{I_0}x_1'(1)+\gamma I_0  x_6(1)=0,\right.\\ 
	    &~\left.\frac{1}{\mu}A_p x_3'(1)-\gamma(A_p x_4(1)-I_0x_5(1))=0.\right\}
        \end{split}
	\end{align}

	and the coefficients of \eqref{eq:opA_FD} defined as
	\begin{eqnarray}\label{eq:coefficientsA}\small
%	\begin{split}
    	a_{41}&:=\frac{\rho_I C_A - \rho_{I_0} C_{I_0}}{\rho_A\rho_I-\rho_{I_0}^2}, \quad\quad a_{51}&:=\frac{\rho_{A} C_{I_0}-\rho_{I_0} C_A  }{\rho_A\rho_I-\rho_{I_0}^2}, \nonumber\\
    	a_{42}&:=\frac{\rho_{I} C_{I_0}-\rho_{I_0} C_I }{\rho_A\rho_I-\rho_{I_0}^2}, \quad\quad 	a_{52}&:=\frac{\rho_{A} C_I - \rho_{I_0} C_{I_0}}{\rho_A\rho_I-\rho_{I_0}^2},  \nonumber\\
    	a_{46}&:=\gamma \frac{\rho_I A_p-\rho_{I_0}I_0}{\rho_A\rho_I-\rho_{I_0}^2}, \quad\quad  	a_{56}&:=\gamma \frac{\rho_{A}I_0-\rho_{I_0} A_p}{\rho_A\rho_I-\rho_{I_0}^2},  \nonumber \\
    	a_{63}&:= \frac{\beta}{\mu}, \quad a_{64}:=\gamma\beta, \quad\quad a_{65}&:=\gamma\beta\frac{I_0}{A_p}\quad\quad\quad\quad 
	%\end{split}
	\end{eqnarray}
	is densely defined in $\mathcal{X}$. Note that $K_2$ contains first order spatial derivative operators.
	Let the bounded input operator be
	\begin{align*}
	B&=\begin{bmatrix}
	0 & 0&  0& 0 & \frac{-\beta}{2g_b},
	\end{bmatrix}^T
	\end{align*}
	then, together with the operator defined in \eqref{eq:opA_FD} the state-space description of the set of PDEs \eqref{eq:PDE_currentcomposite_FD} can be written in short hand form, as follows
	\begin{align}\label{eq:FDstatespace}
	\dot{x}&=\mathcal{A}x+Bu(t),
	\end{align}
	with $\Dom(\mathcal{A})$ and $u(t)\in L^2(0,T)$. To establish the well-posedness of the operator \eqref{eq:opA_FD} in the sense of semigroup theory, we require the following useful Lemma. 
	
		\begin{lemma}\label{lem:FD_adjoint}
		The adjoint $\mathcal{A}^\ast$ of the operator $\mathcal{A}$, defined in \eqref{eq:opA_FD}, is skew-adjoint. More specifically,			\begin{align*}
		\mathcal{A}^\ast=-\mathcal{A},
		\end{align*}
		with $\Dom(\mathcal{A})=\Dom(\mathcal{A}^\ast)$.
	\end{lemma}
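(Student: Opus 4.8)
The plan is to compute the adjoint $\mathcal{A}^\ast$ directly from the defining relation $\angles{\mathcal{A}\vect{x},\vect{y}}_{\mathcal{X}}=\angles{\vect{x},\mathcal{A}^\ast\vect{y}}_{\mathcal{X}}$ and show that it reproduces $-\mathcal{A}$ on the same domain. First I would fix $\vect{x}\in\Dom(\mathcal{A})$ and a candidate $\vect{y}$, and expand the left-hand side using the weighted inner product $\angles{\cdot,\cdot}_{\mathcal{X}}$ defined above. Because $\mathcal{A}$ has the block structure in \eqref{eq:opA_FD}, the top three rows contribute the $H^1$-part coupling $(x_4,x_5,x_6)$ against $(y_1,y_2,y_3)$, while the bottom three rows contribute the $L^2$-part coupling the second-order spatial operators and first-order $\partial_z$ terms against $(y_4,y_5,y_6)$. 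The key mechanism is integration by parts: every second-order operator $\partial_z^2$ acting in the $L^2$-weighted block must be moved twice onto the corresponding $H^1$ test function, and every first-order $\partial_z$ once, so that differentiations are redistributed to land on $\vect{y}$ in exactly the transposed-with-sign pattern that $-\mathcal{A}$ prescribes.

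The crucial bookkeeping step is that the $H^1$ inner product itself already contains one spatial derivative on each factor. Thus a term like $a_{41}\partial_z^2 x_1$ paired in $L^2$ against $\rho_A y_4$ integrates by parts once to produce a boundary term plus an interior term $-\int a_{41}\rho_A (\partial_z x_1)(\partial_z y_4)$, which is precisely an $H^1$-type pairing; matching the weight coefficients $a_{41},a_{42},a_{51},a_{52}$ against the $C_A,C_I,C_{I_0}$ weights in $\angles{\cdot,\cdot}_{\mathcal{X}}$ is where the definitions in \eqref{eq:coefficientsA} must conspire to make the interior terms align symmetrically. I would carry out this matching block by block: the mechanical $2\times 2$ stiffness block (rows/columns $1,2,4,5$), then the magnetic block (row/column $3,6$ with the $\tfrac{\beta}{\mu}\partial_z^2$ entry), and finally the cross-coupling $\partial_z$ entries $a_{46},a_{56},a_{64},a_{65}$ that link the mechanical and electromagnetic variables. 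The skew-symmetry $\mathcal{A}^\ast=-\mathcal{A}$ should emerge because the first-order coupling terms are genuinely skew (one integration by parts flips their sign), while the second-order terms, after two integrations by parts weighted consistently, return to themselves.

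The main obstacle I anticipate is the boundary terms. Each integration by parts generates evaluations at $z=0$ and $z=1$. At $z=0$ the essential boundary conditions $x_1(0)=x_2(0)=x_3(0)=0$ (and likewise for $\vect{y}$, since $\Dom(\mathcal{A})\subset\mathcal{X}$ forces the $H^1_0$ memberships) kill the relevant endpoint contributions. At $z=1$ the situation is more delicate: the surviving boundary terms must vanish, and this is exactly what the three natural boundary conditions in \eqref{eq:domain_A} enforce. The plan is to collect all $z=1$ boundary contributions, group them so that each natural boundary condition $C_A x_1'(1)-C_{I_0}x_2'(1)-\gamma A_p x_6(1)=0$ (and its two companions) annihilates one group, and then verify that the requirement the \emph{same} boundary terms vanish for the adjoint pins down $\Dom(\mathcal{A}^\ast)$ to be identical to $\Dom(\mathcal{A})$. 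Establishing $\Dom(\mathcal{A}^\ast)=\Dom(\mathcal{A})$ rigorously — rather than merely showing $-\mathcal{A}$ agrees with $\mathcal{A}^\ast$ on $\Dom(\mathcal{A})$ — is the subtle part, since one must argue that no larger domain is admissible; I would do this by showing that for $\vect{y}\notin\Dom(\mathcal{A})$ the map $\vect{x}\mapsto\angles{\mathcal{A}\vect{x},\vect{y}}_{\mathcal{X}}$ fails to be $\mathcal{X}$-bounded because an uncompensated boundary term at $z=1$ survives.
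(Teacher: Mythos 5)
Your proposal is correct and is essentially the paper's own proof: you expand $\angles{\mathcal{A}\vect{u},\vect{v}}_\mathcal{X}$ in the weighted inner product, integrate by parts once per term so that the derivatives land in the pattern of $-\mathcal{A}$ (with the identities in \eqref{eq:coefficientsA} collapsing the $a_{ij}$ against the $\rho$-weights back into the stiffness coefficients $C_A$, $C_I$, $C_{I_0}$, $\gamma A_p$, $\gamma I_0$, exactly the block-by-block matching you describe), and then cancel the boundary contributions using the essential conditions at $z=0$ and the natural conditions of \eqref{eq:domain_A} at $z=1$, which is precisely how the paper argues. The only place you go beyond the paper is the closing maximality argument that $\Dom(\mathcal{A}^\ast)$ cannot be strictly larger than $\Dom(\mathcal{A})$ (via unboundedness of the uncompensated point evaluations at $z=1$); the paper skips this and simply declares the adjoint domain \eqref{eq:domain_Astar} after the integration by parts, so your extra step is added rigor within the same method rather than a different route.
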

	
	\begin{proof}
		For any $\vect{u}=\begin{bmatrix}u_1 & \dots &u_6\end{bmatrix}^T$ and $\vect{v}=\begin{bmatrix}v_1 & \dots &v_6\end{bmatrix}^T$ $\in\Dom(\mathcal{A})$ we have,
		\begin{align}
	%	\begin{split} 
		&\angles{\mathcal{A}\vect{u},\vect{v}}_\mathcal{X}
		=\int_{0}^{1}\left[u_1'\braces{C_Av_4'-C_{I_0}v_5'}\right.\nonumber\\ & ~\quad \left.+u_2'\braces{ C_I v_5'-C_{I_0}v_4'}\right.\left.+\tfrac{A_p}{\mu}u_3'v_6'\right.\nonumber\\ & ~\quad \left.+\braces{a_{41} u_1''-a_{42} u_2''-a_{46} u_6'}\braces{\rho_A v_4 -\rho_{I_0}v_5}\right.\nonumber \\ 
			&~\quad+\left.\braces{a_{52}u_2''-a_{51}u_1''+a_{56} u_6'}\braces{\rho_I v_5-\rho_{I_0}v_4}\right.\nonumber \\ 
		&~\quad+\left.\braces{a_{63}u_3''-a_{64} u_4'+a_{65} u_5'}\tfrac{A_p}{\beta}v_6 \right]d_z\nonumber \\ 
		&\quad=-\int_{0}^{1}\left[\braces{C_A u_1'-C_{I_0} u_2'}v_4'\right.\nonumber\\ & \quad \left.+  \braces{C_I u_2'-C_{I_0} u_1'}v_5' \right.\left.+\tfrac{A_p}{\mu}u_3'v_6'\right.\nonumber\\ & \quad \left.+\braces{\rho_A u_4 -\rho_{I_0}u_5}\braces{a_{41} v_1''-a_{42} v_2''-a_{46} v_6'}\right.\nonumber \\ 
		&~\quad+\left.\braces{\rho_I u_5-\rho_{I_0}u_4}\braces{a_{52} v_2''-a_{51}v_1''+a_{56} v_6'}   \right.\nonumber \\ 
		&~\quad+\left. \tfrac{A_p}{\beta}u_6\braces{a_{63}v_3''-a_{64} v_4'+a_{65}v_5'}	 \right]d_z\nonumber \\ 
		&\quad=\angles{\vect{u},-\mathcal{A}\vect{v}}_\mathcal{X},
	%	\end{split}
		\end{align}
		where we used the boundary conditions $v_1(0)=v_2(0)=v_3(0)=0$ and 	
	$C_A v_1'(1)-C_{I_0}v_2'(1)-\gamma A_p v_6(1)=0$, $C_Iv_2'(1))-C_{I_0}v_1'(1)+\gamma I_0  v_6(1)=0$, 	$A_pv_3'(1)-\gamma(A_p v_4(1)-I_0v_5(1))=0$.
	Moreover, we have that $v_1,v_2,v_3\in H^1_0(0,1)$ and $v_4,v_5,v_6\in L^2(0,1)$, therefore we define the domain of $\mathcal{A}^\ast$ as
		\begin{align}\label{eq:domain_Astar}\small
		    \begin{split}
		        \Dom(\mathcal{A^\ast})=&\left\{\vect{v}\in\mathcal{X}\mid 	C_A v_1'(1)-C_{I_0}v_2'(1)-\gamma A_p v_6(1)=0, \right.\\
	        &~\left.C_Iv_2'(1))-C_{I_0}v_1'(1)+\gamma I_0  v_6(1)=0,\right.\\ &~\left.\frac{1}{\mu}A_pv_3'(1)-\gamma(A_p v_4(1)-I_0v_5(1))=0.\right\},
		    \end{split}
		    %v_1(1)=v_2(1)=v_3(1)=0,\right.\\ &\quad\left.
		\end{align}
		to conclude that $\Dom(\mathcal{A^\ast})=\Dom(\mathcal{A})$.
	\end{proof}

	Now we can establish the well-posedness of the novel current actuated piezoelectric composite in the absence of control.%, using the following theorem
	\begin{theorem}\label{theorem:contractionFD} The operator $\mathcal{A}$, defined in \eqref{eq:opA_FD}, generates a semigroup of contractions, satisfying $\norm{T(t)}\leq 	1$ on $\mathcal{X}$.
	\end{theorem}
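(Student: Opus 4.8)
The plan is to invoke the Lumer--Phillips theorem (Theorem~\ref{theorem:Lumer-Phillips}), whose hypotheses are that $\mathcal{A}$ be closed and densely defined and that both $\mathcal{A}$ and its adjoint $\mathcal{A}^\ast$ be dissipative. Density of $\Dom(\mathcal{A})$ in $\mathcal{X}$ has already been asserted following \eqref{eq:coefficientsA}, and Lemma~\ref{lem:FD_adjoint} supplies the decisive structural fact that $\mathcal{A}^\ast=-\mathcal{A}$ with $\Dom(\mathcal{A}^\ast)=\Dom(\mathcal{A})$. This skew-adjointness does essentially all the work: the hard part of the argument has in fact already been discharged in proving Lemma~\ref{lem:FD_adjoint}, and what remains is to read off the two dissipativity estimates.

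The dissipativity conditions then follow from a one-line computation. Since $\mathcal{X}$ carries the symmetric (real) inner product $\angles{\cdot,\cdot}_\mathcal{X}$, for any $\vect{x}\in\Dom(\mathcal{A})$ I would write
\begin{align*}
\angles{\mathcal{A}\vect{x},\vect{x}}_\mathcal{X}=\angles{\vect{x},\mathcal{A}^\ast\vect{x}}_\mathcal{X}=\angles{\vect{x},-\mathcal{A}\vect{x}}_\mathcal{X}=-\angles{\mathcal{A}\vect{x},\vect{x}}_\mathcal{X},
\end{align*}
so that $\angles{\mathcal{A}\vect{x},\vect{x}}_\mathcal{X}=0\leq 0$, and likewise $\angles{\mathcal{A}^\ast\vect{x},\vect{x}}_\mathcal{X}=-\angles{\mathcal{A}\vect{x},\vect{x}}_\mathcal{X}=0\leq 0$. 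Both conditions in \eqref{disspative_operator} therefore hold, with equality; this reflects the physical fact that the uncontrolled energy $\mathcal{H}(t)$ in \eqref{eq:Hamiltonian_PDE_fd_comp} is conserved along trajectories. Closedness I do not expect to require separate effort: an adjoint is always a closed operator, hence $\mathcal{A}^\ast$ is closed and so is $\mathcal{A}=-\mathcal{A}^\ast$. With $\mathcal{A}$ closed, densely defined, and $\mathcal{A},\mathcal{A}^\ast$ dissipative, Theorem~\ref{theorem:Lumer-Phillips} yields that $\mathcal{A}$ generates a strongly continuous semigroup of contractions $T(t)$ on $\mathcal{X}$, i.e.\ $\norm{T(t)}\leq 1$.

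The one genuine subtlety, and the point I would be most careful about, is that this whole argument presupposes that $\angles{\cdot,\cdot}_\mathcal{X}$ is a bona fide inner product making $\mathcal{X}$ a Hilbert space, equivalently that the energy \eqref{eq:Hamiltonian_PDE_fd_comp} is positive definite. This reduces to the coercivity of the two coupled quadratic forms in the inertial and stiffness coefficients, namely $\rho_A\rho_I>\rho_{I_0}^2$ and $C_AC_I>C_{I_0}^2$ (the very determinants appearing in the denominators of \eqref{eq:coefficientsA}) together with positivity of $\tfrac{1}{\beta}A_p$ and $\tfrac{1}{\mu}A_p$, all of which hold for the physical parameters in \eqref{eq:new_energycoefficients}. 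Granting this, the remainder of the proof is immediate from Lemma~\ref{lem:FD_adjoint}, so I would keep this final step short and let the skew-adjointness carry the conclusion.
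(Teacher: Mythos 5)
Your proof is correct, and while it shares the paper's overall skeleton --- Lumer--Phillips (Theorem~\ref{theorem:Lumer-Phillips}) combined with the skew-adjointness Lemma~\ref{lem:FD_adjoint} --- it verifies the decisive dissipativity step by a genuinely different and cleaner mechanism. The paper redoes an explicit integration-by-parts computation for both $\angles{\mathcal{A}\vect{z},\vect{z}}_\mathcal{X}$ and $\angles{\mathcal{A}^\ast\vect{z},\vect{z}}_\mathcal{X}$, with the boundary terms killed by the domains \eqref{eq:domain_A} and \eqref{eq:domain_Astar}; you instead extract both identities in one line from the abstract chain $\angles{\mathcal{A}\vect{x},\vect{x}}_\mathcal{X}=\angles{\vect{x},\mathcal{A}^\ast\vect{x}}_\mathcal{X}=-\angles{\mathcal{A}\vect{x},\vect{x}}_\mathcal{X}$, which is legitimate precisely because Lemma~\ref{lem:FD_adjoint} gives $\Dom(\mathcal{A}^\ast)=\Dom(\mathcal{A})$ and the inner product is real and symmetric. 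Your route buys two things the paper leaves implicit: closedness of $\mathcal{A}$ comes for free (adjoints are always closed and $\mathcal{A}=-\mathcal{A}^\ast$), whereas the paper merely asserts it; and you correctly isolate the hypothesis on which the entire framework silently rests, namely positive definiteness of $\angles{\cdot,\cdot}_\mathcal{X}$, i.e.\ $\rho_A\rho_I>\rho_{I_0}^2$ and $C_AC_I>C_{I_0}^2$ together with positivity of $\tfrac{1}{\beta}A_p$ and $\tfrac{1}{\mu}A_p$, without which $\mathcal{X}$ is not a Hilbert space (these inequalities do hold for the physical coefficients \eqref{eq:new_energycoefficients}, by a Cauchy--Schwarz argument on the cross-sectional integrals \eqref{eq:crossseactionalA}). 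The only structural caveat is that your argument leans on the full strength of Lemma~\ref{lem:FD_adjoint} --- the equality $\mathcal{A}^\ast=-\mathcal{A}$ with equal domains, not merely the inclusion $-\mathcal{A}\subseteq\mathcal{A}^\ast$ --- but granting that lemma as the paper states it, your proof is complete; indeed, skew-adjointness would even yield the stronger conclusion that $T(t)$ extends to a unitary group via Stone's theorem, consistent with the energy conservation you observe.
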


	\begin{proof}
		The closed and densely defined operator $\mathcal{A}$ satisfies
		\begin{align*}\tiny
    		&\angles{\mathcal{A}\vect{z},\vect{z}}_\mathcal{X}=\int_0^1\bigg[
    		-a_{46}z_6'\braces{\rho_A z_4-\rho_{I_0}z_5}\\
    		 &\quad + a_{56}z_6'\braces{\rho_I z_5-\rho_{I_0}z_4}+\braces{-a_{64}z_4'+a_{65}z_5'}\tfrac{A_p}{\beta}z_6\bigg]dz\\
    		& \quad  +\left[ z_4(C_Az_1'-C_{I_0}z_2')+ z_5(C_I z_2'-C_{I_0}z_2')\right.\\
    		& \quad \left.+z_6(\tfrac{A_p}{\mu}z_3) \right]_0^1=0 \leq 0,\\
    		&\angles{\mathcal{A}^\ast\vect{z},\vect{z}}_\mathcal{X}=-\int_0^1\bigg[
    		-a_{46}z_6'\braces{\rho_A z_4-\rho_{I_0}z_5} \\
    		&\quad+ a_{56}z_6'\braces{\rho_I z_5-\rho_{I_0}z_4}
    		+\braces{-a_{64}z_4'+a_{65}z_5'}\tfrac{A_p}{\beta}z_6\bigg]dz\\
    		&\quad-\left[ z_4(C_Az_1'-C_{I_0}z_2')+ z_5(C_I z_2'-C_{I_0}z_2')\right.\\
    		& \quad \left.+z_6(\tfrac{A_p}{\mu}z_3) \right]_0^1=0 \leq 0,
		\end{align*}
		by straightforward calculations using IBD and the domains \eqref{eq:domain_A} and \eqref{eq:domain_Astar}. By the Lumer-Phillips Theorem \ref{theorem:Lumer-Phillips}, the operator generates a semigroup of contractions.
	\end{proof}
	
    This concludes the well-posedness of the  proposed current actuate piezoelectric composite. In the next section we investigate the stabilizability properties of the approximations of the piezoelectric composite \eqref{eq:opA_FD}.

%*****************___________________*****************

\section{Approximations of piezoelectric composites}
% Approximations
% Stabilizability approach
% Application of approach
   For control, simulation, and analysis purposes, it is useful to approximate a system governed by PDEs. In this section, we derive the approximated ordinary differential equations (ode) of the piezoelectric composite  \eqref{eq:opA_FD}. More specifically, we derive two ode systems using two approximation schemes, i.e. the finite element method (FEM) \cite{FlahertyNotes} and the mixed finite element method (MFEM) \cite{GoloSchaft2004}. During the approximation method, certain properties belonging to the original PDE may be lost. Therefore we investigate in the next section the stabilizability properties of the derived approximations of the two approximation schemes and see if the derived approximations bear different stabilizability properties, for this specific case.\\
    
     The application of FEM and MFEM result in a finite dimensional system of ordinary differential equations
        \begin{align}\label{eq:sequenceofapprox}
			\dot{\vect{x}}_N(t)=A_N\vect{x}_N(t)+B_Nu(t), \quad t>0, \quad \vect{x}_N\in \R^N,
		\end{align}
    where the number of ode's are determined by the number of segments $N$ considered, such that $A_N\in\R^{N\times N}$ and $B_N\in\R^N$. More specifically, the whole domain $\Omega$ is the union of all $N$ local segments $\Omega^j_{ab}=\brackets{a,b}=\brackets{z_{j-1},z_j}$ for $j\in\setof{1,\dots,N}$. The dimension of $X_N$ tends to infinity as $N$ tends to infinity, making $\Omega^j_{ab}$ infinitesimal small. \\
    
    First we derive the ode system using FEM, which can be directly done from \eqref{eq:opA_FD}. Subsequently, we derive the ode system using MFEM, where we first have to rewrite the PDE system \eqref{eq:opA_FD}. We conclude this section by verifying the approximations.

    \subsection{FEM approximation}
            For the FEM approximations of the fully dynamic current actuated piezoelectric composite \eqref{eq:opA_FD} on the local segment 
        define the vector $\vect{c^j}:=\begin{bmatrix} c^j_0 & c^j_1 \dots c^j_N \end{bmatrix}^*\in\R^N$, denoting the coefficients of the test functions for the $j$-th variable in the state $\vect{x}$, thus $j\in \setof{1,\dots, 6}$. Then,  the $N$-th order approximation of the linear fully dynamic piezoelectric composite can be written in matrix form \eqref{eq:approx_FEM},
         %\begin{widetext}%[<top strip stretch skip>/<bottom strip skip>]
         \begin{figure*}[b]
         \hrulefill
         \centering
    		\begin{align}\label{eq:approx_FEM}\tiny
    		\underbrace{\begin{bmatrix}
    			\dot{\vect{c}}^1 \\ \dot{\vect{c}}^2 \\ \dot{\vect{c}}^3 \\ \dot{\vect{c}}^4 \\ \dot{\vect{c}}^5 \end{bmatrix}}_{\dot{\vect{x}}_N}=
    		\underbrace{\begin{bmatrix}
    			& & & I_N & &\\ 
    			&&&& I_N &\\
    			&&&& &I_N \\
    			-a_{41}  M_1^{-1}K_2 & a_{42} M_1^{-1}{K}_2 &&&& a_{46} M_1^{-1}K_1 \\
    			a_{51} M_1^{-1}K_2 & -a_{52} M_1^{-1}{K}_2 &&&&  -a_{56} M_1^{-1} K_1 \\
    			&&-\frac{\beta}{\mu} M_1^{-1}K_2& \gamma\beta M_1^{-1} K_1 & -\gamma\beta \frac{I_0}{A_p} M_1^{-1}K_1\\
    			\end{bmatrix}}_{A_N}\underbrace{\begin{bmatrix}
    			\vect{c}^1 \\ \vect{c}^2 \\ \vect{c}^3 \\ \vect{c}^4 \\ \vect{c}^5  \\ \vect{c}^6\end{bmatrix}}_{\vect{x}_N}
    		+\underbrace{\begin{bmatrix}
    			0\\0 \\ 0 \\ 0 \\0\\ B_1 
    			\end{bmatrix}}_{B_N}\mathcal{I}(t),
    		\end{align} 
    		 \end{figure*} 
    % \end{widetext}
    		where the coefficients are as in \eqref{eq:coefficientsA} and the squared matrices $M_1,K_1,K_2\in\R^{M\times M}$ and column vector $B_1\in\R^M$ are composed of the local element matrices  as follows, %\eqref{eq:localelementsFEM}
    		
    		\begin{align}\label{eq:FEM_elementmatrices}
    		\tiny
    		\begin{split}
    		M_1&=\frac{h_j}{6}\begin{bmatrix} 4 &1& \\ 1 &4&1 \\ &1&\ddots&1 \\
    		&&2&4&2\\&&& 2&4
    		\end{bmatrix}, \ 	K_1=\frac{1}{2}\begin{bmatrix} 0 &-1& \\ 1 &0&-1 \\ &1&\ddots&-1 \\
    		&&1&0&-1\\&&& 1&0
    		\end{bmatrix}, \\ K_2&=\frac{1}{h_j}\begin{bmatrix} 2 &-1& \\ -1 &2&-1 \\ &-1&\ddots&-1 \\
    		&&-1&2&-1\\&&& -1&2
    		\end{bmatrix},\ B_1=\frac{-\beta}{2g_b}\begin{bmatrix}
    		1 \\ 1 \\\vdots \\1 \end{bmatrix}.
    			\end{split}
    		\end{align}
    		%\normalsize
        This concludes the approximation of the fully dynamic piezoelectric composite using the finite element method. %Next, we investigate the stabilizability property of the obtained approximated system using the <<>> for linear systems. we emplore the standard test and trial function $U_j$

    %*****************___________________*****************
    \subsection{MFEM approximation}

    In this section we treat the approximation of the proposed current actuated piezoelectric cantilever piezoelectric beam using the MFEM method  \cite{GoloSchaft2004}. Therefore, we need to rewrite the PDE in a specific form, which we touch upon first. In the derivation of the current actuated model \eqref{eq:PDE_currentcomposite_FD} we made use of the Lagrangian $\mathcal{L}=\int_0^\ell {L(q,\dot{q})}dz_1$, where ${L}$ denotes the Lagrangian density function is a function of the generalized coordinates (recall, $q=\col(v,w_{z_1},\Phi)$) and the generalized velocities $\dot{q}$. To apply MFEM, \eqref{eq:PDE_currentactuator_FD} needs to be written in the form. 
    \begin{align}\label{eq:ph_desiredform}{}
        \frac{d}{dt}\begin{pmatrix} q_z \\ p \end{pmatrix}{}=\begin{pmatrix}\frac{\partial}{\partial z}\frac{\partial H }{\partial p}(q_z,p)\\\frac{\partial}{\partial z}\frac{\partial H }{\partial q_z}(q_z,p)
        \end{pmatrix}+Bu(t),%\begin{pmatrix}0\\\end{pmatrix}
    \end{align}{}
    where $p$ denotes the generalized momenta and $H(q_z,p)$ is the density functional of the total energy \eqref{eq:Hamiltonian_PDE_fd_comp} in different coordinates $q_z=\col(v_z,w_{zz},\Phi_z)$ and $p$. This is done by using the Legendre transformation
    \begin{align}\label{eq:Legendre_trans}
        p:=\frac{\partial L}{\partial \dot{q}}(q,\dot{q})&=\begin{bmatrix}\rho_A \dot{v}-\rho_{I_0}\dot{w}_z \\ \rho_I \dot{w}_z-\rho_{I_0}\dot{v}\\ \frac{1}{\beta}A_p\dot{\Phi}+\gamma\braces{A_p\dot{v}-I_0\dot{w}_z}\end{bmatrix},\\
        H(q,p)&=p^T\dot{q}-L(q,\dot{q}).
    \end{align}{}
    Moreover, we require $H(q,p)\rightarrow H(q_z,p)$. Therefore, let $\bar{x}=\col(q,p)$, $x=\col(q_z,p)$ and consider the coordinate transformation  $x(q_z,p)=S\bar{x}(q,p)$, then
    \begin{align}\label{eq:Ham_coordinate_transform}
        H(q_z,p)=\bar{x}^T\bar{Q}\bar{x}=x^T S^{-T} \bar{Q} S^{-1} x = x^T Q x,
    \end{align}{}
    with 
    \begin{align}\label{eq:ph_Qmatrix}\tiny
        Q:&=S^{-T} \bar{Q} S^{-1}=\begin{bmatrix}Q_1 & Q_2 \\Q_2^T & Q4 \end{bmatrix},
        \end{align}
        where
        \begin{align*}
            Q_1&=\begin{bmatrix}
            C_A+\gamma^2\beta A_p & -\braces{C_I+\gamma^2\beta I_0}& 0\\
            -\braces{C_I+\gamma^2\beta I_0}& C_I+\gamma^2\beta \tfrac{I_0^2}{A_p}& 0\\
            0&0&\frac{A_p}{\mu}
            \end{bmatrix},\\
             Q_2&=\begin{bmatrix} 
                0 & 0 & -\gamma\beta\\
                0 & 0 & \gamma\beta\tfrac{I_0}{A_p}\\
                0&0&0
            \end{bmatrix},\\
            %\begin{center}
            \ Q_4&=\begin{bmatrix}
                \tfrac{\rho_I}{\rho_A\rho_I-\rho_{I_0}^2}& \tfrac{\rho_I}{\rho_A\rho_I-\rho_{I_0}^2} &0\\
                 \tfrac{\rho_I}{\rho_{I_0}\rho_I-\rho_{I_0}^2}& \tfrac{\rho_A}{\rho_A\rho_I-\rho_{I_0}^2} &0\\
                0&0&\tfrac{\beta}{A_p}
            \end{bmatrix}.
            %\end{center}
        \end{align*}
        
    %     &=\begin{bmatrix}
    %     C_A+\gamma^2\beta A_p & -\braces{C_I+\gamma^2\beta I_0}& 0 & 0 & 0 & -\gamma\beta\\
    %     -\braces{C_I+\gamma^2\beta I_0}& C_I+\gamma^2\beta \frac{I_0^2}{A_p}& 0 & 0 & 0 & \gamma\beta\frac{I_0}{A_p}\\
    %     0&0&\frac{A_p}{\mu}&0&0&0\\
    %     0&0&0& \frac{\rho_I}{\rho_A\rho_I-\rho_{I_0}^2}& \frac{\rho_I}{\rho_A\rho_I-\rho_{I_0}^2} &0\\
    %     0&0&0& \frac{\rho_I}{\rho_{I_0}\rho_I-\rho_{I_0}^2}& \frac{\rho_A}{\rho_A\rho_I-\rho_{I_0}^2} &0\\
    %     -\gamma\beta & \gamma\beta\frac{I_0}{A_p}&0&0&0&\frac{\beta}{A_p}
    % \end{bmatrix}.
    % \end{align}{}

    %     \begin{figure*}[h!b]
% \normalsize
\begin{figure*}[b]
\hrulefill
\centering
\begin{align}\label{eq:approx_MFEM}\tiny
    \begin{bmatrix}\dot{\vect{x}}_1 \\ \vdots \\ \dot{\vect{x}}_j \\ \vdots \\ \dot{\vect{x}}_N\end{bmatrix} = \begin{bmatrix} J_1 & -B^{+}_{2}B^{T+}_{1} & \dots\\
    \\
    \dots &  -B_j^{-}D_{j-1}B_{j-2}^{T-} & B_j^{-}B_{j-1}^{T-} & J_j & &-B_j^{+}B_{j+1}^{T+} & B_j^{+}D_{j+1}B_{j+2}^{T+} & \dots\\
    \\
     &&&&&& \dots & B_N^{-}B_{N-1}^{T-} &J_N 
    \end{bmatrix}+\begin{bmatrix}B^e \\ \vdots \\ B^e \\ \vdots \\B^e \end{bmatrix}u(t)
\end{align}{}
% \hrulefill
% \vspace*{4pt}
% \end{figure*}
\end{figure*}

    The coordinate transformation \eqref{eq:Ham_coordinate_transform}, guarantees the same rate of change of the total energy, i.e.
    \begin{align}\label{eq:Hamiltonian_ph_rateofchange}\small
        \begin{split}
               \frac{d}{dt}H&=\Big[\underbrace{\braces{C_A v_z(z)-C_{I_0}w_{zz}(z)-\gamma A_p \dot{\Phi}(z)}}_{e^{B_1}_\ast}\underbrace{\dot{v}(z)}_{f^{B_1}_\ast} \\
        &+ \underbrace{\braces{C_Iw_{zz}(z)-C_{I_0}v_{z}(z)+\gamma I_0  \dot{\Phi}(z)}}_{e^{B_2}_\ast}\underbrace{\dot{w}_z(z)}_{f^{B_2}_\ast} \\
        &+ \underbrace{\braces{\frac{1}{\mu}A_p\Phi_z(z)-\gamma(A_p\dot{v}(z)-I_0\dot{w}_z(z))}}_{e^{B_3}_\ast}\underbrace{\dot{\Phi}(z)}_{f^{B_3}_\ast}  \Big]_0^\ell,
        \end{split}{}
    \end{align}{}
    where $e^{B_l}_*$, $f^{B_l}_*$, with $l\in\setof{1,2,3}$ denotes the port-Hamiltonian flows and efforts on the boundary $*\in\setof{0,\ell}$, see for instance \cite{port-HamiltonianIntroductory14,GoloSchaft2004,VenSSIAM2014}. \\

   Let $B^e=\begin{bmatrix}0 & 0 & 0& 0& 0 & -(h_b-h_a)\end{bmatrix}^T$ and denote the $i\times i$ identity matrix by $\mathbb{I}_i$. Then, by the quadratic nature of $H(q_z,p)$ we write the dynamics \eqref{eq:ph_desiredform} in the compact form
    \begin{align}\label{eq:ph_PDE_comp}
        \dot{x}&=\underbrace{\begin{bmatrix} 0 & \mathbb{I}_3 \\ \mathbb{I}_3 & 0\end{bmatrix}\frac{\partial}{\partial z}}_{J}Qx+B^eu,
    \end{align}{}
     with boundary conditions as boundary flows and efforts 
    \begin{align}\label{eq:ph_boundaryconditions}
       \begin{split}
            f^{B_1}_0=f^{B_2}_0=f^{B_3}_0=0,\\
          e^{B_1}_\ell=e^{B_2}_\ell=e^{B_3}_\ell=0
       \end{split}{}
    \end{align}{}
    defined in \eqref{eq:Hamiltonian_ph_rateofchange}, where the first row constitute the essential boundary conditions and the second row contains the natural boundary conditions.\\

    %  $v(0)=\dot{v}(0)=0$, $w_{z_1}(0)=\dot{w}_{z_1}(0)=0$, and $\Phi(0)=\dot{\Phi}(0)=0$, 
    %  $C_{11}A v_z(\ell)-C_{11}I_0w_{zz}(\ell)-\gamma A \dot{\Phi}(\ell)=0$, $C_{11}Iw_{zz}(\ell)-C_{11}I_0v_{z}(\ell)+\gamma I_0  \dot{\Phi}(\ell)$, $\frac{1}{\mu}A\Phi_z(\ell)-\gamma(A\dot{v}(\ell)-I_0\dot{w}_z(\ell))=0$.\\
 
 The piezoelectric composite \eqref{eq:ph_PDE_comp}, with \eqref{eq:ph_Qmatrix} and boundary conditions \eqref{eq:ph_boundaryconditions} is approximated using the mixed trial functions 
  \begin{align}\label{eq:MFEM_approximation_forms}
       \begin{split}
            x^i(z,t) &\approx x_{ab}^i(t)\phi_{ab}(z)\\
            \dot{x}^i(z,t) &\approx \dot{x}^i_{ab}(t)\phi_{ab}(z)\\
            e^i(z,t) &\approx e_{a}^i(t)\phi_a(z)+e_{b}^i(t)\phi_b(z),
       \end{split}{}
    \end{align}{}
    on the local domain $\Omega_{ab}$, for $j\in\setof{1,\dots,6}$ corresponding to the sequence of states in the original PDE. Note that the $e:=Qx$ in \eqref{eq:ph_PDE_comp} requires more smoothness in the spatial variable than $x$ or $\dot{x}$. Let $\vect{u}_{ab}=col(\vect{f}^B_a,\vect{e}_b^B)$, $\vect{y}_{ab}=col(\vect{e}^B_a,-\vect{f}_b^B)$ and following the procedure described in \cite{GoloSchaft2004} for \eqref{eq:ph_PDE_comp},
    % This result in the local interconnection structure 
    % \begin{align}\label{eq:MFEM_approx_interconnectionstructure}
    %     \dot{x}^i_{ab}(t)=e^i_a(t)-e^i_b(t)
    % \end{align}{}
    % for the local coefficients $j\in\setof{1,\dots,6}$ and 
    % \begin{align}\label{eq:MFEM_def_e_ab}
    %     e_{ab}(t):=e^i_{a}(t)\int_a^b\phi_a(z)\phi_{ab}(z)dz+e^i_{b}(t)\int_a^b\phi_b(z)\phi_{ab}(z)dz.
    % \end{align}{}
     with 
    \begin{align*}
        \phi_{ab}(z)=\frac{1}{b-a}, \quad \phi_a(z)=\frac{b-z}{b-a}, \quad \phi_b(z)=\frac{z-a}{b-a}
    \end{align*}
 results in the local finite dimensional system
\begin{align}
    \begin{split}
            \dot{\vect{x}}_{ab}&=J_{ab}(Q_{ab}\vect{x}_{ab})+B_{ab}\vect{u}_{ab}\\%+B_{i,ab}   B_{e,ab}u(t)\\
    \vect{y}_{ab}&=B_{ab}^T(Q_{ab}\vect{x}_{ab})+D_{ab}\vect{u}_{ab}
    \end{split}{}
\end{align}{}
with 
\begin{align}
    \begin{split}
        J_{ab}&=2\begin{bmatrix} 0 & \mathbb{I}_3 \\  -\mathbb{I}_3  & 0  \end{bmatrix},\quad Q_{ab}=\frac{1}{b-a}Q,\\
        B_{ab}&=2\begin{bmatrix} 0 & -\mathbb{I}_3 \\  \mathbb{I}_3  & 0  \end{bmatrix}, \quad D_{ab} = \begin{bmatrix} 0 & -\mathbb{I}_3 \\ \mathbb{I}_3 & 0\end{bmatrix}.
    \end{split}{}
\end{align}{}
Then, interconnecting the system $j=1,\dots,N$ using $u_j=-B^{T-}y_{j-1}-B^{T+}y_{j+1}$ where $B^{T-}+B^{T+}=B_{j}$ is decomposed into the left and right component, with respectively the annotation $-,+$. Then, the interconnection of $N$ segments results in the global dynamical system \eqref{eq:approx_MFEM}, where $u(t)$ is the external current input. This concludes the approximation of the fully dynamic piezoelectric composite using the mixed finite element method.

\subsection{Verification of approximation}
   The approximated current-controlled piezoelectric composites models \eqref{eq:approx_FEM} and \eqref{eq:approx_MFEM} are verified in two manners using the system coefficients presented in Table \ref{tab:numerical_coefficients}. The first is the convergence of the tip response, i.e. the longitudinal and transverse deflection. The transverse deflection $w$ is numerically computed using the trapezium integration rule on $w_{z_1}$. In Fig \ref{fig:Validation_displacement}, both the transverse and longitudinal deflection of the actuated beam are depicted. It can be seen that the two approximations provide the same dynamics and converge to the same response. The second verification method underpins the observation of convergent trajectories and is by investigating the convergence of the imaginary part of the eigenvalues. Table \ref{tab:Validation_eigenvalues} summarizes the convergence for the first four eigenvalues (responsible for the dominant behaviour) for increasing approximation order for the two approximation methods.\\
    
   The presented data shows (under increasing approximation order) the convergence of both approximation methods individually, and it shows that the two approximation models converge towards a similar behaviour. %(under increasing approximation order)
    
    \begin{table}
        \centering
        \begin{tabular}{ l l l l }
            \hline
            Description & Symbol & Value & Unit \\ \hline
             \multicolumn{4}{l}{\textbf{Beam geometry}}\\ \hline
                 \hspace{3mm} Length        & $\ell$        & $1$     & $[m]$ \\
                 \hspace{3mm} Width         & $g_b$         & $0.1$   & $[m]$ \\
                 \hspace{3mm} Thickness     & $h$           & $0.01$  & $[m]$ \\ \hline 
                \multicolumn{4}{l}{\textbf{Beam Substrate}}\\ \hline
                \hspace{3mm} Volumetric density & $\rho$      & $5000$ & $[kg/m^3]$ \\
                \hspace{3mm} Young's Modulus    & $C_{11,s}$    & $10^5$ & $[N/m^2]$ \\ \hline
                \multicolumn{4}{l}{\textbf{Piezoelectric layer}}\\ \hline
                \hspace{3mm} Volumetric density         & $\rho_p$      & $7600$ & $[kg/m^3]$ \\
                \hspace{3mm} Young's Modulus            & $C_{11,p}$    & $140\times10^5$ & $[N/m^2]$ \\
                \hspace{3mm} Coupling coefficient       & $\gamma$      & $10^{-3}$    & $[C/m^2]$ \\
                \hspace{3mm} Dielectric impermittivity  & $\beta$       & $10^6$        & $[m/F]$ \\
                \hspace{3mm} Magnetic permeability      & $\mu$         & $1.2\times 10^6$ & $[H/m]$ \\
                % \hspace{3mm} Young's Modulus            & $-$    & $1.2*10^{-6}$ & [H/m] \\
            \hline
        \end{tabular} \vspace{1mm}
        %  \begin{tabular}{ l l l l }
        %     \hline
        %     Description & Symbol & Value & Unit \\ \hline
        %     \textbf{Geometry of piezoelectric and mechanical layer}\\ \hline
        %          \hspace{3mm} Length        & $\ell$        & $1$     & [m] \\
        %          \hspace{3mm} Width         & $g_b$         & $0.1$   & [m] \\
        %          \hspace{3mm} Thickness     & $h$           & $0.01$  & [m] \\ \hline   
        %     \textbf{Substrate}\\ \hline
        %         \hspace{3mm} Volumetric density & $\rho_s$      & $5000$ & [kg/m^3] \\
        %         \hspace{3mm} Young's Modulus    & $C_{11,s}$    & $10^5$ & [N/m^2] \\ \hline   
        %      \textbf{Substrate}\\ \hline   
        %         \hspace{3mm} Volumetric density         & $\rho_p$      & $7600$ & [kg/m^3] \\
        %         \hspace{3mm} Young's Modulus            & $C_{11,p}$    & $140\times10^5$ & [N/m^2] \\
        %         \hspace{3mm} Coupling coefficient       & $\gamma$      & $10^{-3}$$    & [C/m^2] \\
        %         \hspace{3mm} Dielectric impermittivity  & $\beta$       & $10^6$        & [m/F] \\
        %         \hspace{3mm} Magnetic permeability      & $\mu$         & $1.2\times10^6$ & [kg/m^3] \\
        %         % \hspace{3mm} Young's Modulus            & $-$    & $1.2*10^{-6}$ & [H/m] \\
        %     \hline
        % \end{tabular}
        \caption{Physical coefficients}
        \label{tab:numerical_coefficients}
    \end{table}{}

    \begin{remark}
        From Table \ref{tab:Validation_eigenvalues}, it can be seen that the MFEM method approximates the eigenvalues higher than the FEM method. However, it does so with higher matrix sparsity, which provides a computational advantage. With increasing approximation order, the eigenvalues of the two approximation methods converge towards each other.
    \end{remark}
    
    \begin{figure*}
		\includegraphics[width=\linewidth]{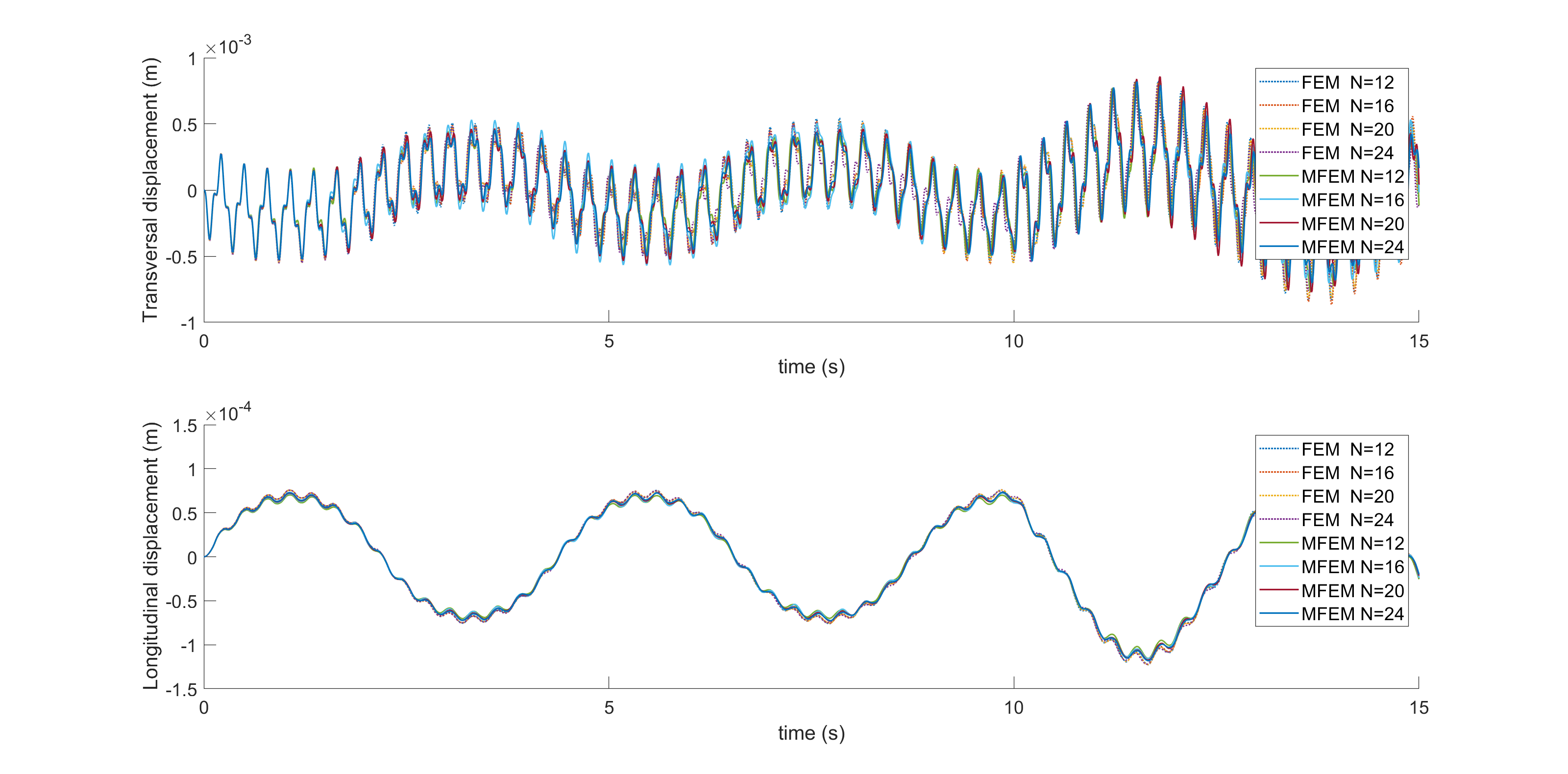}
		\caption{Converging transverse and longitudinal displacement for increasing approximation order $N=\begin{bmatrix}12 & 16&20&24\end{bmatrix}$.} 
		\label{fig:Validation_displacement}
	%	\vspace{-0.3mm}
	\end{figure*}
    %columnwidth
    
   \begin{table}[!t]
        \caption{Convergence of imaginary part of numerical determined eigenvalues.}
%   \tiny
\begin{center}
   \resizebox{\columnwidth}{!}{
            \begin{tabular}{| r || l | l || l | l || l | l || l | l |}
        \hline
           & \multicolumn{2}{l||}{$\im(\lambda_1)$} & \multicolumn{2}{l||}{$\im(\lambda_2)$} & \multicolumn{2}{l||}{$\im(\lambda_3)$} & \multicolumn{2}{l|}{$\im(\lambda_4)$}  \\  \hline
         N & FEM & MFEM  & FEM & MFEM  & FEM & MFEM  & FEM & MFEM\\ \hline  \hline
    12 &  1.4350 & 1.4360 & 4.3295 & 4.3580 & 7.2982 & 7.4371 & 10.3913 & 10.8043 \\
    16 &  1.4345 & 1.4351 & 4.3174 & 4.3332 & 7.2419 & 7.3172 & 10.2360 & 10.4522 \\
    20 &  1.4343 & 1.4347 & 4.3118 & 4.3218 & 7.2158 & 7.2633 & 10.1644 & 10.2982 \\
    24 &  1.4342 & 1.4344 & 4.3087 & 4.3157 & 7.2017 & 7.2343 & 10.1255 & 10.2169 \\
    28 &  1.4341 & 1.4343 & 4.3069 & 4.3120 & 7.1932 & 7.2171 & 10.1022 & 10.1686 \\
    32 &  1.4341 & 1.4342 & 4.3057 & 4.3096 & 7.1877 & 7.2059 & 10.0870 & 10.1375 \\
    36 &  1.4340 & 1.4342 & 4.3049 & 4.3080 & 7.1839 & 7.1982 & 10.0766 & 10.1163 \\
    40 &  1.4340 & 1.4341 & 4.3043 & 4.3068 & 7.1812 & 7.1928 & 10.0692 & 10.1012 \\
   100 &  1.4339 & 1.4340 & 4.3022 & 4.3026 & 7.1715 & 7.1734 & 10.0426 & 10.0477 \\
        \hline
        \end{tabular} }\vspace{1mm}
        \label{tab:Validation_eigenvalues}
    \end{center}
    \end{table}

\section{Stabilizability of piezoelectric composite approximations}
    The stabilizability properties of the derived and verified ode systems of the approximated current-controlled piezoelectric composites are investigated in this section. Both the approximations using FEM or MFEM are finite-dimensional linear systems of the form 
    % The stabilizability of a 

    \begin{align}\label{eq:fin-dim_generalsys}
        \begin{split}
            \dot{x}(t)&=Ax(t)+Bu(t), \quad x\in\R^n\\
            y(t)&=Cx(t)
        \end{split}{}
    \end{align}%+Du(t) $A, B, C, D$  \in\R^m
    with $u=\mathcal{I}(t)$, $y\in\R^p$, matrixes $A, B, C$ of suitable dimension, and $n=6N$. The controllability of such system can be verified using Kalman's controlability rank condition, where the rank of the controllability matrix $\mathcal{C}:=\begin{bmatrix}B&AB&\dots&A^{n-1}B\end{bmatrix}$ must be equal to the dimension of the state space, i.e. $\rank(\mathcal{C})=n$.\\% Clearly, if the system is controllable, then it is also stabilizable.\\
    
    The stabilizability of the system can be investigated by checking the 
    % In the case that $\rank(\mathcal{C})=r<n$, the system \eqref{eq:fin-dim_generalsys} is not controllable. However, it can be verified if the system is stabilizable by investigating the system in a different form, also known as 
    the controllable canonical form. Therefore, let $x_c(t)$ and $x_c(t)$ denote respectively the collection of controllable and uncontrollable states and define $\bar{x}(t):=\col(x_c(t),x_u(t))$. If there exist a non-singular state transformation matrix $T$ such that $\bar{x}=Tx$, we can obtain the system
    \begin{align}\label{eq:stabilizable_form}
       \begin{split}
            \begin{bmatrix}\dot{x}_c (t)\\ \dot{x}_u(t)\end{bmatrix}&=\underbrace{\begin{bmatrix}\bar{A}_{11} & \bar{A}_{12} \\ 0 & \bar{A}_{22}\end{bmatrix}}_{\bar{A}}\underbrace{\begin{bmatrix}{x}_c(t) \\ {x}_u(t)\end{bmatrix}}_{\bar{x}(t)}+\underbrace{\begin{bmatrix}\bar{B}_1 \\ 0\end{bmatrix}}_{\bar{B}}u(t)\\
             y(t)&=\bar{C}\bar{x}(t),%+Du(t),
       \end{split}{}
    \end{align}{}
    isomorphic to \eqref{eq:fin-dim_generalsys}, with  $\bar{A}:=T^{-1}AT$, $\bar{B}:=T^{-1}B$, and $\bar{C}:=CT$. \\
    % The stabilizability of \eqref{eq:stabilizable_form} can be investigated using the following useful result
    % \begin{theorem}[Stabilizability]\label{thm:stabilitzability}
    % The system given in \eqref{eq:stabilizable_form} is asymptotic stabilizable if the pair $(\bar{A}_{11},\bar{B}_1)$ is controllable and the matrix $\bar{A}_{22}$ is Hurwitz.
    % \end{theorem}
    
    % If the system is either controllable or stabilizable, then there exist an input $u=-Fx$, such that the closed loop system $(A-BF)$ is asymptotically stable. The difference lies if this will happen in finite time or with $t\rightarrow\infty$, respectively for controllable and stabilizable systems. 
    
    % The conditions for controllability and stabilizability can be verified when the system coefficients are specified, i.e. a specific case. Or more generally for symbolic coefficients. 
    By investigating the ode systems symbolically, it can be shown that for $N=1$ and $N=2$ both FEM and MFEM approximations are controllable. However, for higher-order approximations ($N\geq3$), the cost to calculate and analyse $\mathcal{C}$ for FEM and MFEM becomes too expensive. Hence, for both cases, it was not possible to establish results for  $N\geq3$. The calculations have been performed with the use of the Peregrine high-performance cluster.\\
    
    %This leaves us on the one hand with establishing 
    We can however establish the controllability/stabilizability property for the current actuated piezoelectric composite for a specific case (specified coefficients) combined with simulation results. 
    %Leading up to a conjecture that the piezoelectric current actuated piezoelectric composite is stabilizable. 
    From a more analytical point of view, we can investigate if the ode's satisfy Brockett's necessary condition for stabilizability \cite{Brockett83asymptoticstability}, which we will treat next.\\
    
	\begin{theorem}[Brockett condition]\label{thm:brockett}
			Consider the system $\dot{x}=f(x,u)$, where $x\in\R^n$. A necessary condition for the existence of a continuous feedback law $u=u(x)$ rendering $x_0\in\R^n $ locally asymptotically stable for the closed-loop system $\dot{x}=f(x,u(x))$ is that 
			\begin{align*}
			f(x,u)=y,
			\end{align*}
			is solvable for all $\|y\|$ sufficiently small.
	\end{theorem}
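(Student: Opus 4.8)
The plan is to establish the necessary condition directly by a topological degree argument, which is the classical route to Brockett's theorem. Assume a continuous feedback $u=u(x)$ with $u(x_0)=0$ renders $x_0$ locally asymptotically stable for the closed loop $\dot{x}=g(x)$, where $g(x):=f(x,u(x))$ is continuous and $g(x_0)=0$; without loss of generality take $x_0=0$. The aim is to show that the image of $g$, and hence of $f$, necessarily covers a full neighborhood of the origin, so that $f(x,u)=y$ is solvable for every sufficiently small $\norm{y}$ — precisely the asserted condition. Reading this implication is what yields the necessary condition.

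First I would extract a compact, positively invariant neighborhood of the origin from asymptotic stability. By a converse Lyapunov theorem (valid even for merely continuous $g$, e.g. Kurzweil's theorem) there is a smooth $V$ with $V(0)=0$, $V>0$ on a punctured neighborhood, and $\angles{\nabla V,g}<0$ away from the origin. For small $c>0$ the sublevel set $\Omega:=\setof{x\mid V(x)\le c}$ is then a compact neighborhood of $0$ on whose boundary $\partial\Omega=\setof{x\mid V(x)=c}$ the field $g$ points strictly inward; in particular $g\neq 0$ on $\partial\Omega$, so the Brouwer degree $\deg(g,\Omega,0)$ is well defined.

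The key step is to evaluate this degree and show it is nonzero. The relevant classical fact is that an asymptotically stable equilibrium has local index $(-1)^n$, whence $\deg(g,\Omega,0)=(-1)^n\neq 0$; one proves this by homotoping $g$, through fields that remain transverse and inward on $\partial\Omega$, to the model field $-x$ whose degree is manifestly $(-1)^n$, and then invoking homotopy invariance of the degree. This index computation is the analytical heart of the argument and the step I expect to be most delicate: inward-transversality must be preserved along the entire homotopy, and one must work with the mere continuity of $g$ — all that a continuous feedback $u$ guarantees, so that solutions of $\dot{x}=g(x)$ need not be unique — which is exactly the regularity subtlety any careful treatment of Brockett's condition must confront.

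Finally I would transfer solvability from the nonvanishing of the degree. For $\norm{y}$ sufficiently small the perturbed field $g-y$ still points inward on $\partial\Omega$, hence $g-y\neq 0$ there, and homotopy invariance gives $\deg(g-y,\Omega,0)=\deg(g,\Omega,0)=(-1)^n\neq 0$. A nonzero degree forces a zero inside $\Omega$, i.e. there exists $x\in\Omega$ with $g(x)=f(x,u(x))=y$. Thus $f(x,u)=y$ admits a solution for every sufficiently small $\norm{y}$, which is the stated condition. Since the result is classical, in the paper one may alternatively simply invoke \cite{Brockett83asymptoticstability} in place of reproducing this argument.
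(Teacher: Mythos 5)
The paper does not actually prove this statement: Theorem~\ref{thm:brockett} is quoted as a classical result and attributed to \cite{Brockett83asymptoticstability}, exactly as your closing sentence anticipates. So the comparison here is between your reconstruction and the argument in the cited literature, not between two proofs inside the paper.

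Your degree-theoretic argument is the standard proof of Brockett's condition and is sound in outline: Kurzweil's converse Lyapunov theorem (which indeed tolerates a merely continuous closed-loop field $g$, hence non-unique solutions), a compact sublevel set $\Omega=\setof{V\le c}$ on whose boundary $g$ points strictly inward, a nonvanishing Brouwer degree, and the perturbation step showing that $g-y$ still has no zero on $\partial\Omega$ for $\norm{y}$ small, so that homotopy invariance plus the existence property of the degree yields a solution of $g(x)=f(x,u(x))=y$. The one step that fails as literally written is the computation of the degree by ``homotoping $g$ \dots\ to the model field $-x$'': the field $-x$ need not point inward on $\partial\Omega$, because a sublevel set of a Lyapunov function need not be star-shaped about the origin, so a homotopy to $-x$ keeping inward transversality throughout need not exist. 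The standard repair is to take $-\nabla V$ as the target instead: both $g$ and $-\nabla V$ have strictly negative inner product with $\nabla V$ on $\partial\Omega$, so the straight-line homotopy $(1-\lambda)g-\lambda\nabla V$ never vanishes there, and hence $\deg(g,\Omega,0)=\deg(-\nabla V,\Omega,0)=(-1)^n\deg(\nabla V,\Omega,0)=(-1)^n\neq 0$, where the last equality uses the classical fact that the index of the gradient field of a smooth function at an isolated local minimum equals $+1$ (note that $\nabla V\neq 0$ on $\Omega\setminus\setof{0}$, since $\angles{\nabla V,g}<0$ there). Alternatively, one may invoke the Krasnosel'skii--Zabreiko theorem on the index of an asymptotically stable equilibrium, which packages exactly this statement. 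With that substitution your argument is complete and coincides with the proof underlying the reference the paper cites.
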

		For linear systems we derive the following corollary
	\begin{corollary}\label{thm:brockett_linear}
	    Consider the system $\dot{x}=Ax+Bu$, where $x\in\R^n$. A necessary condition for the existence of a continuous feedback law $u=Fx$ rendering $x_0\in\R^n $ locally asymptotically stable for the closed-loop system $\dot{x}=(A+BF)x$ is that 
	    \begin{align*}
		    \im\begin{bmatrix}A & B\end{bmatrix} \in \R^{n}.
		\end{align*}{}
	\end{corollary}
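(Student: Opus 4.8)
The plan is to specialize Brockett's condition (Theorem \ref{thm:brockett}) to the linear vector field $f(x,u)=Ax+Bu$ and to convert the abstract solvability requirement into an image condition on the augmented matrix $[A\ B]$. First I would observe that $f(x,u)=Ax+Bu$ is linear, hence continuous in $(x,u)$, and that the candidate feedback $u=Fx$ is itself continuous. Therefore Theorem \ref{thm:brockett} applies directly: a necessary condition for $u=Fx$ to render the origin locally asymptotically stable for $\dot{x}=(A+BF)x$ is that the equation $Ax+Bu=y$ admit a solution $(x,u)$ for every $y$ with $\|y\|$ sufficiently small.

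Next I would characterize the attainable right-hand sides. The set of all values $Ax+Bu$, as $(x,u)$ ranges over $\R^n\times\R^m$, is exactly $\im A+\im B=\im[A\ B]$, which is a linear subspace of $\R^n$. The crux of the argument is the elementary fact that a linear subspace $V\subseteq\R^n$ contains an open ball about the origin if and only if $V=\R^n$: if $V$ contains $\{\,y:\|y\|<\varepsilon\,\}$, then by closure under scalar multiplication $V$ contains $\lambda y$ for all $\lambda>0$ and all such $y$, hence all of $\R^n$; the converse is immediate.

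Combining these two steps completes the proof. Solvability of $Ax+Bu=y$ for all sufficiently small $\|y\|$ is equivalent to $\im[A\ B]$ containing a neighbourhood of the origin, which by the subspace observation is equivalent to $\im[A\ B]=\R^n$. Thus Brockett's necessary condition reduces precisely to the stated image condition on $[A\ B]$.

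I expect the only genuine subtlety to be the passage from \emph{``solvable for all sufficiently small $\|y\|$''} to \emph{``solvable for all $y\in\R^n$''}; this is exactly where the linearity of $f$ is essential, since it is what collapses the local solvability requirement of Theorem \ref{thm:brockett} into the global image statement $\im[A\ B]=\R^n$. Every remaining step is a routine substitution into Theorem \ref{thm:brockett} together with the definition of the column space of $[A\ B]$.
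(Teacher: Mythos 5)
Your proof is correct and follows essentially the same route the paper intends: the paper states this corollary without any proof, and your argument — substituting $f(x,u)=Ax+Bu$ into Theorem \ref{thm:brockett} and observing that the attainable set $\im A+\im B=\im\begin{bmatrix}A & B\end{bmatrix}$ is a linear subspace, which contains a ball about the origin only if it equals $\R^n$ — is exactly the derivation left implicit there and used later when the paper checks $\rank\begin{bmatrix}A_N & B_N\end{bmatrix}=n$. Your reading also correctly interprets the paper's loosely written condition $\im\begin{bmatrix}A & B\end{bmatrix}\in\R^{n}$ as the equality $\im\begin{bmatrix}A & B\end{bmatrix}=\R^{n}$.
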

	This leads us to the following two results.
	
	\begin{theorem}\label{theorem:FD_FEM_stabilizablity}
			The approximated linear fully dynamic electromagnetic piezoelectric composite \eqref{eq:approx_FEM} using FEM, satisfies Brockett's necessary condition for asymptotic stabilizability through continuous state feedback. 
			\begin{proof}
			    Using \eqref{eq:approx_FEM} we compute
			    \begin{align*}
				\begin{bmatrix}	A_N & B_N  \end{bmatrix}&\backsim \begin{bmatrix}
			    I_{6N} & \tilde{B}_{FEM}
				\end{bmatrix},
				\end{align*}
			    with 
			    \begin{align*}
			         \tilde{B}^T_{FEM} = \begin{bmatrix}0_{1\times 2N} & \Asterisk_{1\times N} & 0_{1\times 3N} \end{bmatrix},
			    \end{align*}{}
			   where $\Asterisk$ contains unspecified elements. Using simple application of Linear Algebra we have that 
			   $$\rank\begin{bmatrix} A_N & B_N \end{bmatrix}=6N=n$$ 
			   and with use of Corollary \ref{thm:brockett_linear} we conclude that \eqref{eq:approx_FEM} satisfies Brockett's necessary condition for asymptotic stabilizability.
			\end{proof}
		\end{theorem}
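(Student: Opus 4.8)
The plan is to verify Corollary~\ref{thm:brockett_linear} directly, i.e.\ to show that the $6N\times(6N+1)$ matrix $\begin{bmatrix} A_N & B_N\end{bmatrix}$ has full row rank $6N$, since then its image is all of $\R^{6N}$. Because rank is preserved under elementary row and column operations, I would bring $\begin{bmatrix} A_N & B_N\end{bmatrix}$ to a normal form in which an $I_{6N}$ block is exposed; once an identity block of size $6N$ is present the rank is immediately $6N$, independently of the remaining column.

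First I would exploit the block structure of $A_N$ read off from \eqref{eq:approx_FEM}, namely $A_N=\begin{bmatrix} 0 & I_{3N}\\ P & R\end{bmatrix}$, where the upper-right identity encodes the kinematic coupling $\dot{\vect c}^{1,2,3}=\vect c^{4,5,6}$ and $P$ is the $K_2$-block collecting the second-derivative terms in the first three block-columns. The three blocks $I_N$ in the first three block-rows supply $3N$ pivots in the velocity columns $4,5,6$; using them to clear those columns from the lower block-rows reduces the lower part to $\begin{bmatrix} P & 0 & \ast\end{bmatrix}$ (the $K_1$-terms in $R$ live only in the velocity columns and are removed in this step, and the input acts only on the last column). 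Hence $\rank\begin{bmatrix}A_N & B_N\end{bmatrix}=3N+\rank\begin{bmatrix} P & \ast\end{bmatrix}$, so it suffices to show that $P$ is nonsingular.

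Next I would show $\det P\neq0$. Since $P$ is assembled from the single matrix $G:=M_1^{-1}K_2$ weighted by the scalar coefficients of \eqref{eq:coefficientsA}, its determinant factors through a Kronecker-type structure: the $\Phi$-row decouples and contributes $-\tfrac{\beta}{\mu}\det G$, while the $(v,w_z)$-block contributes a power of the $2\times2$ coefficient determinant $a_{41}a_{52}-a_{42}a_{51}$ times $(\det G)^2$. A short computation with \eqref{eq:coefficientsA} collapses that coefficient determinant to $(C_AC_I-C_{I_0}^2)/(\rho_A\rho_I-\rho_{I_0}^2)$, which is nonzero because the stiffness and inertia forms entering \eqref{eq:Hamiltonian_PDE_fd_comp} are positive definite; together with invertibility of the mass matrix $M_1$ and the discrete stiffness $K_2$ this yields $\det P\neq0$. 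Consequently $A_N$ is invertible, $\begin{bmatrix}A_N & B_N\end{bmatrix}\sim\begin{bmatrix} I_{6N} & A_N^{-1}B_N\end{bmatrix}$, and solving $A_N\tilde B_{FEM}=B_N$ block-wise (the velocity blocks force $b_4=b_5=b_6=0$, the nonsingular coefficient block forces $b_1=b_2=0$, and the $\Phi$-row gives $b_3=-\tfrac{\mu}{\beta}K_2^{-1}M_1B_1$) shows that $A_N^{-1}B_N$ is nonzero only in its third block, matching $\tilde B_{FEM}^{T}=[\,0_{1\times2N}\ \Asterisk_{1\times N}\ 0_{1\times3N}\,]$. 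The exposed $I_{6N}$ then gives $\rank\begin{bmatrix}A_N & B_N\end{bmatrix}=6N=n$, and Corollary~\ref{thm:brockett_linear} is satisfied.

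The main obstacle I anticipate is the nonsingularity of $K_2$ (equivalently of $P$): one must confirm that the assembled discrete second-derivative operator for the fixed-free boundary data is positive definite, so that $G=M_1^{-1}K_2$ is invertible, and simultaneously verify that the coefficient determinant reduces to the manifestly positive physical quantity $(C_AC_I-C_{I_0}^2)/(\rho_A\rho_I-\rho_{I_0}^2)$ rather than vanishing. Everything else — the block elimination against the identity rows and the reduction to the normal form — is routine bookkeeping once these two nonvanishing facts are established.
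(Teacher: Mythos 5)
Your proposal is correct and follows essentially the same route as the paper's proof: row-reduce $\begin{bmatrix}A_N & B_N\end{bmatrix}$ to $\begin{bmatrix}I_{6N} & \tilde{B}_{FEM}\end{bmatrix}$ --- equivalently, show $A_N$ is invertible with $A_N^{-1}B_N$ supported only on the third block --- and then invoke Corollary \ref{thm:brockett_linear}. The only difference is the level of detail: where the paper appeals to a ``simple application of Linear Algebra,'' you actually justify the invertibility of $A_N$ (block elimination against the $I_{3N}$ rows, the commuting-block determinant factorization of $P$, the identity $a_{41}a_{52}-a_{42}a_{51}=(C_AC_I-C_{I_0}^2)/(\rho_A\rho_I-\rho_{I_0}^2)>0$, and the nonsingularity of $M_1$ and of $K_2$ under the fixed-free boundary conditions), and your blockwise solve of $A_N\tilde{B}_{FEM}=B_N$ correctly reproduces the support pattern $\tilde{B}^T_{FEM}=\begin{bmatrix}0_{1\times 2N} & \Asterisk_{1\times N} & 0_{1\times 3N}\end{bmatrix}$ that the paper asserts.
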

	In a similar fashion we derive the following result for the ode system derived using MFEM.
	\begin{theorem}\label{theorem:FD_MFEM_stabilizablity}
			The approximated linear fully dynamic electromagnetic piezoelectric composite \eqref{eq:approx_MFEM} using MFEM, satisfies Brockett's necessary condition for asymptotic stabilizability through continuous state feedback. 
			\begin{proof}
			    Using \eqref{eq:approx_MFEM} we compute
			    \begin{align*}
				\begin{bmatrix}	A_N & B_N  \end{bmatrix}&\backsim \begin{bmatrix}
			    I_{6N} & \tilde{B}_{MFEM}
				\end{bmatrix},
				\end{align*}
			    with 
			    \begin{align*}
			         \tilde{B}^T_{MFEM} = \begin{bmatrix}\tilde{B}_1^T & \tilde{B}_2^T & \dots & \tilde{B}_N^T \end{bmatrix}
			    \end{align*}{}
			   where $\tilde{B}_i^T=\begin{bmatrix}0 & 0 & \Asterisk & 0 & 0 & 0\end{bmatrix}$ for $i\in\setof{1,\dots,N}$ with unspecified element $\Asterisk$. Using simple application of Linear Algebra we have that 
			   $$\rank\begin{bmatrix} A_N & B_N \end{bmatrix}=6N=n.$$ 
			  Using Corollary \ref{thm:brockett_linear}, we conclude that \eqref{eq:approx_MFEM} satisfies Brockett's necessary condition for asymptotic stabilizability.
			\end{proof}
		\end{theorem}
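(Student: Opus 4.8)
The plan is to verify the hypothesis of Corollary \ref{thm:brockett_linear}, namely $\rank\begin{bmatrix} A_N & B_N\end{bmatrix}=n=6N$, by the same row-reduction strategy already used for FEM in Theorem \ref{theorem:FD_FEM_stabilizablity}. Since $\begin{bmatrix} A_N & B_N\end{bmatrix}$ is a $6N\times(6N+1)$ matrix, full row rank $6N$ is maximal, so it suffices to exhibit the row equivalence $\begin{bmatrix} A_N & B_N\end{bmatrix}\backsim\begin{bmatrix} I_{6N} & \tilde{B}_{MFEM}\end{bmatrix}$; the leading identity block then forces $\rank=6N$ irrespective of the entries of $\tilde{B}_{MFEM}$. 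Reading off \eqref{eq:approx_MFEM}, $A_N$ is the block-banded assembly whose diagonal blocks are the local dynamics $J_{ab}Q_{ab}$ and whose off-diagonal blocks are the interconnection terms built from the $B_j^{\pm}$ and $D$, while $B_N=\col(B^e,\dots,B^e)$ with $B^e=\begin{bmatrix}0&0&0&0&0&-(h_b-h_a)\end{bmatrix}^T$ is supported only in the sixth entry of each $6$-block. The theorem thus reduces to showing that $A_N$ is nonsingular, together with a bookkeeping computation of where the input column is carried.

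The core step is the nonsingularity of $A_N$, which I would obtain from the port-Hamiltonian structure of \eqref{eq:ph_PDE_comp}. The lossless power-preserving interconnection of the local Dirac structures yields $A_N=\mathcal{J}\mathcal{Q}$, where $\mathcal{Q}=\operatorname{blkdiag}(Q_{ab})$ is positive definite because $Q$ in \eqref{eq:ph_Qmatrix} is positive definite for physically admissible (positive) stiffness, permittivity and permeability coefficients, and $\mathcal{J}$ is the global skew-symmetric interconnection matrix assembled from the invertible local blocks $J_{ab}=2\left[\begin{smallmatrix}0 & \mathbb{I}_3\\ -\mathbb{I}_3 & 0\end{smallmatrix}\right]$. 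As $\mathcal{Q}$ is invertible, $A_N$ is nonsingular precisely when $\mathcal{J}$ is; and $\mathcal{J}$ is an even-dimensional ($6N$) skew-symmetric matrix whose local diagonal blocks have nonzero Pfaffian, with the clamped essential boundary conditions $f^{B}_0=0$ in \eqref{eq:ph_boundaryconditions} pinning the boundary constants so that no nontrivial kernel survives the assembly. Row-reducing with these nonsingular diagonal blocks as pivots and sweeping out the banded coupling then takes $A_N$ to $I_{6N}$.

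Carrying $B_N$ through the same elimination gives $\tilde{B}_{MFEM}=A_N^{-1}B_N=\mathcal{Q}^{-1}\mathcal{J}^{-1}B_N$; because $J_{ab}^{-1}$ swaps the momentum half (entries $4$--$6$) with the position half (entries $1$--$3$), the per-block support of $B_N$ in the sixth entry is mapped to the third entry, producing $\tilde{B}_i^T=\begin{bmatrix}0&0&\Asterisk&0&0&0\end{bmatrix}$ as claimed. The identity block then gives $\rank\begin{bmatrix} A_N & B_N\end{bmatrix}=6N=n$, and Corollary \ref{thm:brockett_linear} delivers the conclusion.

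The main obstacle I anticipate is the global nonsingularity of $\mathcal{J}$ (equivalently of $A_N$): invertibility of the individual $J_{ab}$ and positivity of $\mathcal{Q}$ are immediate, but ensuring that the banded interconnection together with the mixed clamped/free boundary conditions introduces no zero eigenvalue requires care. I would handle this either by an induction/Pfaffian argument exploiting the block-banded pattern of \eqref{eq:approx_MFEM}, or---consistent with the symbolic approach used elsewhere in the paper---by verifying nonsingularity of the assembled $A_N$ directly; should $\mathcal{J}$ nevertheless possess a one-dimensional kernel for some parity, the augmented column $B_N$, which by the support computation lies outside $\im(A_N)$, would still supply the missing pivot and preserve $\rank=6N$.
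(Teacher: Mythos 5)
Your proposal follows the paper's strategy exactly at the top level---reduce the theorem to $\rank\begin{bmatrix}A_N & B_N\end{bmatrix}=6N$, exhibit the row equivalence to $\begin{bmatrix}I_{6N} & \tilde{B}_{MFEM}\end{bmatrix}$, and invoke Corollary \ref{thm:brockett_linear}---but you certify the key rank fact by a genuinely different route. The paper (here, as in Theorem \ref{theorem:FD_FEM_stabilizablity}) presents the row reduction as a direct computation on \eqref{eq:approx_MFEM} and simply reports the reduced form together with the support pattern of $\tilde{B}_{MFEM}$; you instead argue structurally, factoring $A_N=\mathcal{J}\mathcal{Q}$ with $\mathcal{Q}=\operatorname{blkdiag}(Q_{ab})\succ 0$ and $\mathcal{J}$ skew-symmetric, so that nonsingularity of $A_N$ (hence the identity block) reduces to nonsingularity of $\mathcal{J}$, and the per-block support of $\tilde{B}_{MFEM}=A_N^{-1}B_N$ in the third entry follows from $\mathcal{J}^{-1}$ swapping the coordinate and momentum halves together with the decoupled third row of $Q$ in \eqref{eq:ph_Qmatrix}. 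This buys insight the paper's computation does not give---it explains the pattern rather than observing it---but it leaves the same residual fact, invertibility of the assembled $\mathcal{J}$ under the boundary conditions \eqref{eq:ph_boundaryconditions}, to a computation or induction you only sketch; the paper buries that same fact inside its asserted reduction, so the two arguments are of comparable rigor.

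Delete your fallback, however: it is not a harmless hedge but an error. A real skew-symmetric matrix has even rank, so $\ker\mathcal{J}$---and hence $\ker A_N$, since $\mathcal{Q}$ is invertible---has even dimension; a ``one-dimensional kernel for some parity'' cannot occur. If $\mathcal{J}$ were singular, its kernel would have dimension at least two, and appending the single column $B_N$ could raise the rank to at most $6N-1$, so Brockett's condition would then \emph{fail} rather than be rescued. Moreover, your claim that $B_N\notin\im(A_N)$ has no basis: the support computation you appeal to is carried out under the assumption that $A_N$ is invertible, in which case $B_N\in\im(A_N)$ trivially, and it provides no information in the singular case. Your proof, like the paper's, therefore stands or falls with the nonsingularity of $A_N$, and the concluding escape clause should be removed.
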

	
	 \begin{remark}{}
        For the approximated the piezoelectric actuator \eqref{eq:PDE_currentactuator_FD} we conjecture that the ode system using symbolic coefficients for both FEM and MFEM result in controllable systems. 
    \end{remark}

    In support of the established results in Theorem \ref{theorem:FD_FEM_stabilizablity} and Theorem \ref{theorem:FD_MFEM_stabilizablity}, we present {numerical} results showing the convergence of trajectories to zero {and the behaviour of the closed-loop eigenvalues} in the next section.

    % The system we investigate has a spectrum $\sigma(A)$ that consist of imaginary eigenvalues $\lambda$, i.e. $\lambda\in i\R$ and $Re(\lambda)=0$ for all $\lambda\in\sigma(A)$. In the case that the dimension of $x_u>0$ then the system \eqref{eq:stabilizable_form} cannot be asymptotically stabilized by a state feedback $u=-Fx$. Therefore, we require in fact that the approximations \eqref{eq:sequenceofapprox} are controllable.      
    
    %Comparison with TAC2019 Ozkan Ozer paper
%Notes on QS model compared to Voss??

%*****************___________________*****************
\section{{Numerical} results}
\blue{
In this section we present some numerical results, consisting of system trajectories and the behaviour of the eigenvalues of the approximated models. For second-order PDEs it has been shown that such systems are only stabilizable if it is stabilizable with collocated output feedback, see \cite{AmmariTucsnak2001} Therefore, we present the system trajectories of the closed-loop system $(A-BC)$, using $C:=B^T$. Moreover, we show the behaviour of the eigenvalues for large approximations order $N$, to show the behaviour of  the eigenvalues for the limit case $N\to\infty$. This gives an indication of what the stabilizability properties of the original PDE \eqref{eq:PDE_currentcomposite_FD} could be with the $-B^\ast$ feedback. \\
%In conjoining with the established results of Theorem \ref{theorem:FD_FEM_stabilizablity} and Theorem \ref{theorem:FD_MFEM_stabilizablity}, we present simulation results showing the convergence of the trajectories to zero, which suggests the stabilizability for collocated output feedback, i.e. 
}

The simulations of the trajectories are executed using the approximation order $N=20$ and we use the same physical coefficients as in \cite{OzerTAC2019}, see Table \ref{tab:numerical_coefficients}. While it is simple to declare an initial state for one of the approximations with some sort of mechanical offset, it is rather challenging to find the corresponding initial state (especially the electromagnetic part) for the other approximation scheme. Therefore, we choose the initial state to be equal to a snapshot of the state-space of the open-loop systems of Fig \ref{fig:Validation_displacement} and use $t=845$ as a starting point for the closed-loop simulations.  The resulting longitudinal and traverse deflection of the closed-loop systems through electric output feedback  are presented in Fig \ref{fig:closedloop_FEM_deflections} and Fig \ref{fig:closedloop_MFEM_deflections} for the FEM and MFEM approximations, respectively. It can be seen in Fig \ref{fig:closedloop_FEM_deflections} and \ref{fig:closedloop_MFEM_deflections}, that both the longitudinal and transverse displacement converge to zero for both systems. The closed-loop simulations of the purely current actuated fully dynamical piezoelectric composite presented in \cite[Table II]{OzerTAC2019} using finite difference approximation scheme does not show convergence. \\

  \begin{figure}
		\includegraphics[width=\columnwidth]{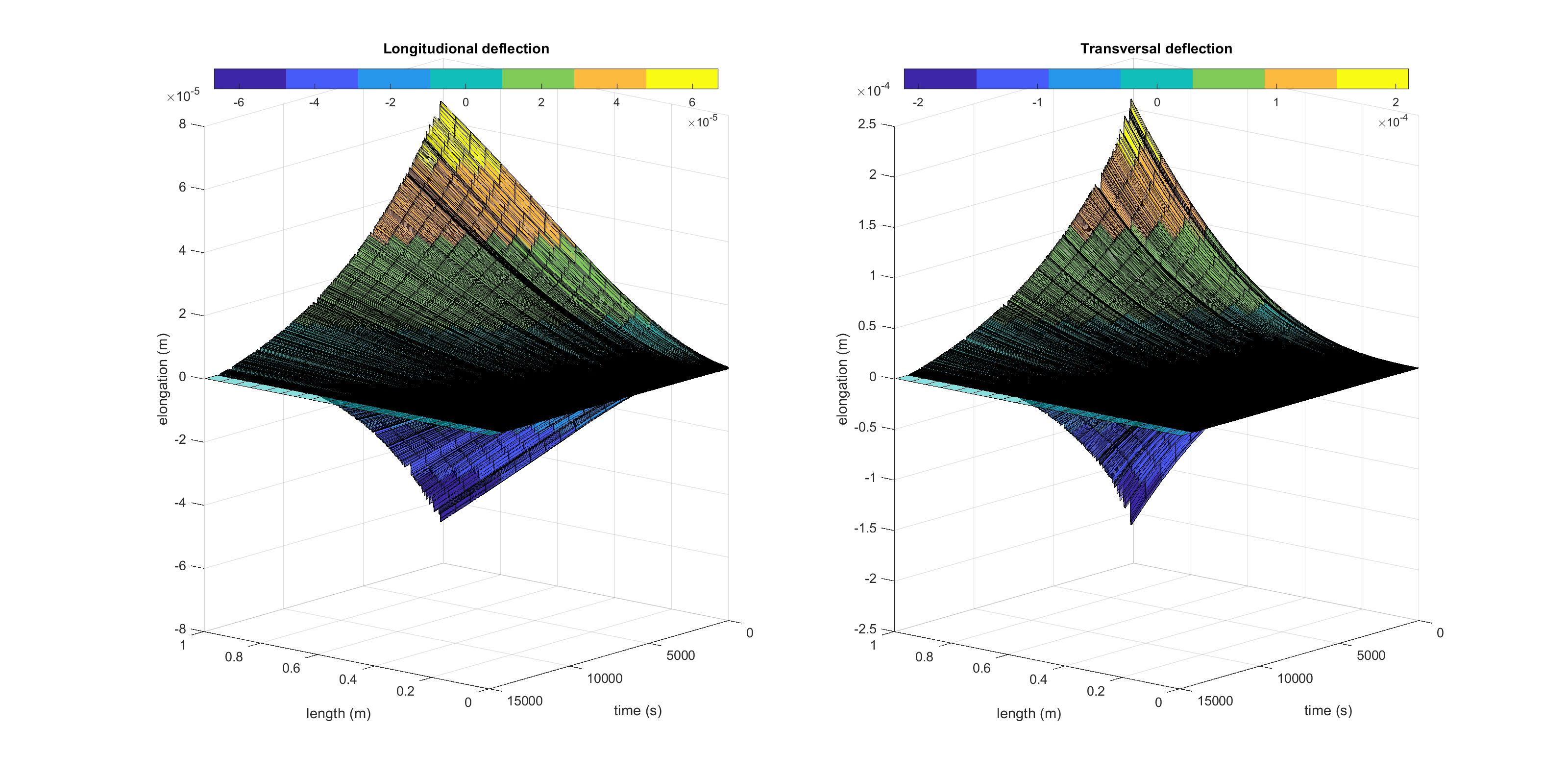}
		\caption{Closed-loop stabilizing trajectories. The longitudinal and traverse deflection of the fully dynamic piezoelectric composite using FEM approximations} 
		\label{fig:closedloop_FEM_deflections}
	%	\vspace{-0.3mm}
	\end{figure}
	
	  \begin{figure}
		\includegraphics[width=\columnwidth]{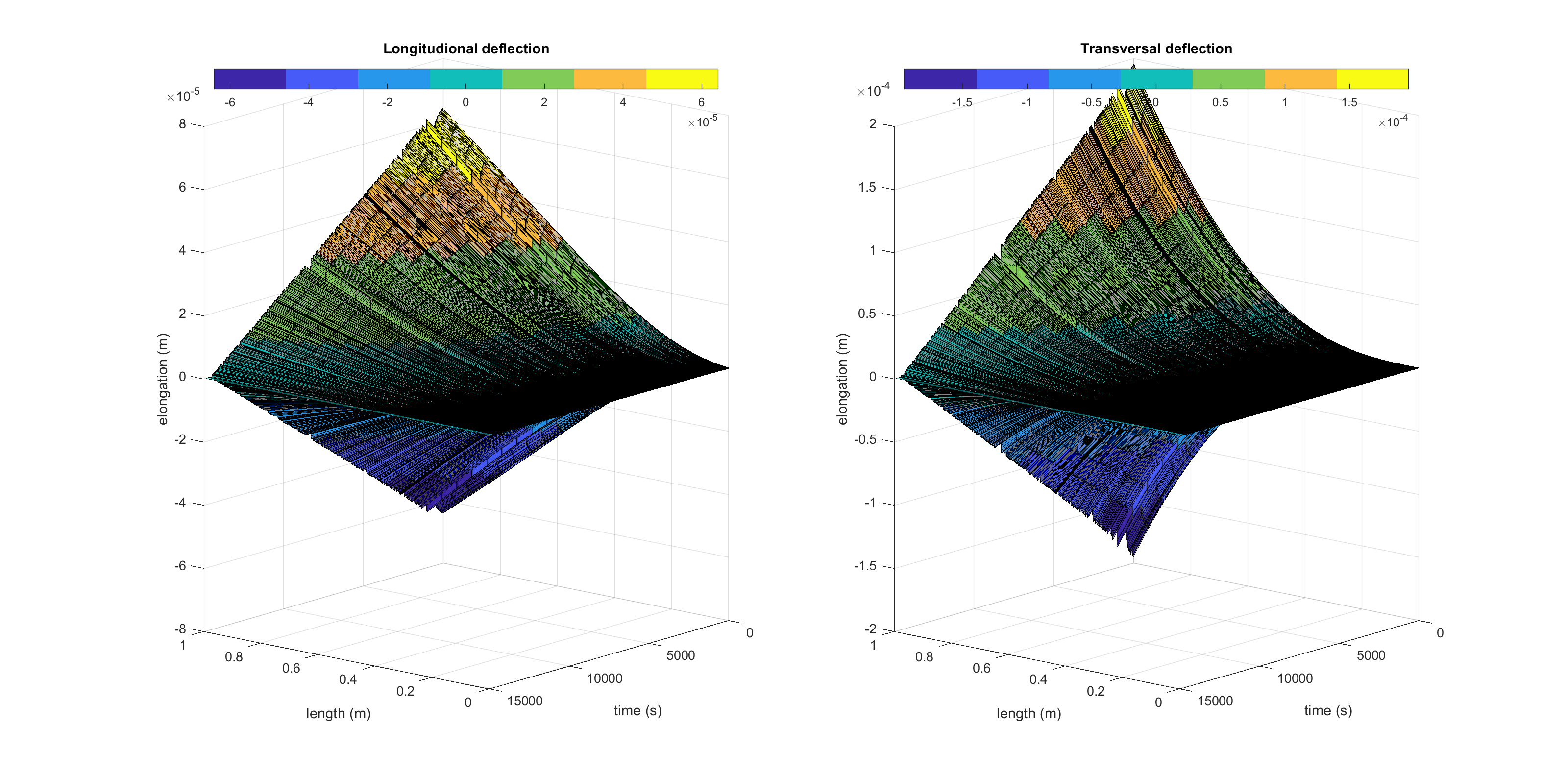}
		\caption{Closed-loop stabilizing trajectories. The longitudinal and traverse deflection of the fully dynamic piezoelectric composite using MFEM approximations.} 
		\label{fig:closedloop_MFEM_deflections}
	%	\vspace{-0.3mm}
	\end{figure}

    \begin{figure}
		\includegraphics[width=\columnwidth]{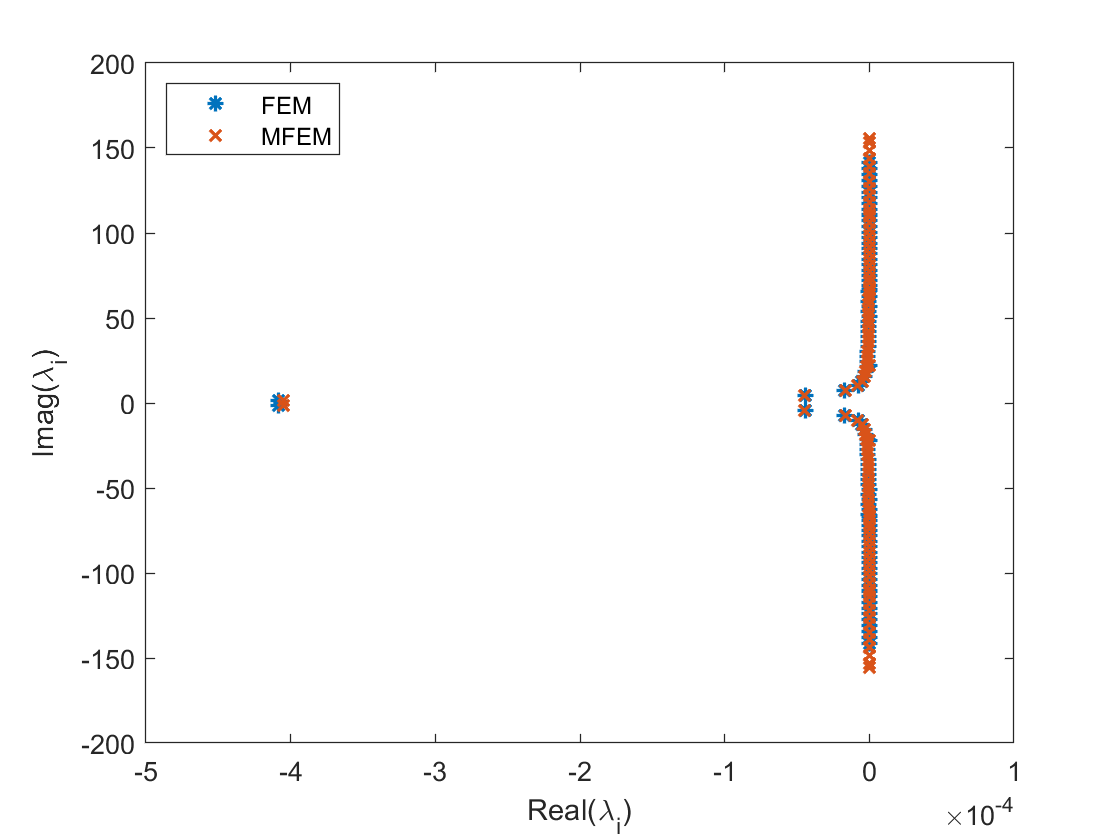}
		\caption{Eigenvalues of the closed-loop systems.} 
		\label{fig:closedloop_asymptotic_eigenvalues}
	%	\vspace{-0.3mm}
	\end{figure}

    Next to the stabilizing trajectories depicted in Fig \ref{fig:closedloop_FEM_deflections} and Fig \ref{fig:closedloop_MFEM_deflections}, we present the behaviour of the eigenvalues for the large approximation order $N=100$. In Fig \ref{fig:closedloop_asymptotic_eigenvalues}, we show the first 100 eigenvalues $\lambda_i$ for $i\in\setof{0,\dots,100}$ with imaginary part closes to zero. As can be seen in Fig \ref{fig:closedloop_asymptotic_eigenvalues}, the eigenvalues for both approximation methods show asymptotic behaviour towards the imaginary axis for large frequencies, i.e. $\im(\lambda_i)>>0$. This phenomenon indicates asymptotic stability of the closed-loop system using collocated output-feedback for the limit case $N\to\infty$ and perhaps also the original PDE \cite{CurtainZwart1995introduction,Luo1999SSIDSA}.

%*****************___________________*****************
\section{Conclusion}
In this work, we have proposed a novel purely current actuated piezoelectric composite model with a fully dynamic electromagnetic field assumption. The novelty of this model results from the definition of the magnetic-flux, inspired by the procedure of deriving voltage actuated models, which makes it possible to circumvent the use of magnetic vector potentials and a gauge function to produce a purely current actuated piezoelectric composite model. The existing modelling framework of piezoelectric beams and composites, resulting in models actuated by either voltage, charge, or current using magnetic vector potentials, is now extended with current actuated models employing the magnetic-flux. Furthermore, we have shown that the approximations of this novel current actuated model satisfies a necessary condition for stabilizability using either a finite element method or mixed finite element method. Moreover,  the closed-loop trajectories and eigenvalue behaviour shown in the numerical results, support the idea of an asymptotically stabilizing piezoelectric composite through electric output feedback. \\

{\em Future Work:}  It is interesting to investigate the asymptotic stabilizability properties of the original PDE and see if these align with the analytic and numerical stabilizability results of the approximations. Furthermore, it would be interesting to see the stabilizability of the novel piezoelectric composite using different boundary conditions (e.g. fixed-fixed).

\bibliographystyle{IEEEtran}
% Generated by IEEEtran.bst, version: 1.14 (2015/08/26)

\end{document}